\theoremstyle{plain}
\newtheorem{thm}{{\bf Theorem}}[section]
\newtheorem{cor}[thm]{{\bf  Corollary}}
\newtheorem{prop}[thm]{{\bf Proposition}}
\newtheorem{lemma}[thm]{{\bf Lemma}}
\newtheorem{fact}[thm]{{\bf Fact}}
\newtheorem{claim}[thm]{{\bf Claim}}
\theoremstyle{definition}
\newtheorem{define}[thm]{{\bf Definition}}
\newtheorem{note}[thm]{{\bf Note}}
\newtheorem{question}[thm]{{\bf Question}}
\newcommand{\cf}{\mathord{\mathrm{cf}}}
\newcommand{\size}[1]{\left\vert {#1} \right\vert}
\newcommand{\p}{\mathcal{P}}
\newcommand{\ot}{\mathord{\mathrm{ot}}}
\newcommand{\seq}[1]{\langle {#1} \rangle}
\newcommand{\ka}{\kappa}
\newcommand{\la}{\lambda}
\newcommand{\om}{\omega}
\newcommand{\pkl}{\mathcal{P}_\kappa \lambda}
\newcommand{\bbC}{\mathbb{C}}
\newcommand{\bbR}{\mathbb{R}}
\newcommand{\calA}{\mathcal{A}}
\newcommand{\calF}{\mathcal{F}}
\newcommand{\calU}{\mathcal{U}}
\title[]{A note on $\delta$-strongly compact cardinals}
\author[T. Usuba]{Toshimichi Usuba}
\address[T. Usuba]
{Faculty of Science and Engineering,
Waseda University, 
Okubo 3-4-1, Shinjyuku, Tokyo, 169-8555 Japan}
\email{usuba@waseda.jp}
\keywords{countably 
tight, $\delta$-strongly compact cardinal, Lindel\"of space, $\om_1$-strongly compact cardinal, uniform ultrafilter}
\subjclass[2010]{Primary 03E55, 54A25}
\begin{document}

\begin{abstract}
In this paper we investigate more characterizations and applications of 
$\delta$-strongly compact cardinals.
We show that, for a cardinal $\ka$, the following are equivalent:
(1) $\ka$ is $\delta$-strongly compact, (2) For every regular $\la \ge \ka$
there is a $\delta$-complete uniform ultrafilter over $\la$, and
(3) Every product space of $\delta$-Lindel\"of spaces is $\ka$-Lindel\"of.
We also prove that in the Cohen forcing extension,
the least $\om_1$-strongly compact cardinal
is an exact upper bound on 
the tightness of the products of two countably tight spaces.
\end{abstract}
\maketitle

\section{Introduction}
Bagaria and Magidor \cite{BM1, BM2} introduced the notion
of $\delta$-strongly compact cardinals, which is a variant of strongly compact cardinals.

\begin{define}[Bagaria-Magidor \cite{BM1, BM2}]
Let $\ka$, $\delta$ be uncountable cardinals with $\delta \le \ka$.
$\ka$ is \emph{$\delta$-strongly compact}
if for every set $A$, every $\ka$-complete filter over $A$
can be extended to a $\delta$-complete ultrafilter.
%$\ka$ is \emph{almost strongly compact} if
%$\ka$ is $\delta$-strongly compact for every $\delta <\ka$.
\end{define}

$\delta$-strongly compact cardinals, especially for the case $\delta=\om_1$, 
have various characterizations and many applications, 
see Bagaria-Magidor \cite{BM1, BM2}, Bagaria-da Silva \cite{BS}, and Usuba \cite{U1,U2}.
In this paper, we investigate  more characterizations and applications 
of $\delta$-strongly compact cardinals.

Ketonen \cite{K} characterized strongly compact cardinals by
the existence of uniform ultrafilters, where
a filter $F$ over a cardinal $\la$ is \emph{uniform} if
$\size{X}=\la$ for every $X \in F$.
Ketonen proved that an uncountable cardinal $\ka$ is strongly compact cardinal if,
and only if,
for every regular $\la \ge \ka$,
there exists a $\ka$-complete uniform ultrafilter over $\la$.
We prove a similar characterization for $\delta$-strongly compact cardinals.

\begin{thm}\label{prop3.3}
Let $\ka$ and $\delta$ be uncountable cardinals with $\delta \le \ka$.
Then $\ka$ is $\delta$-strongly compact
if, and only if, for every regular $\la \ge \ka$,
there exists a $\delta$-complete uniform ultrafilter over $\la$.
\end{thm}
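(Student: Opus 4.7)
For the forward direction, fix a regular $\la \ge \ka$ and let $\calC$ be the cobounded filter on $\la$. Regularity of $\la$ makes $\calC$ $\la$-complete, hence $\ka$-complete; by $\delta$-strong compactness extend $\calC$ to a $\delta$-complete ultrafilter $U$ on $\la$. Every $X \in U$ contains a cobounded set, so by regularity of $\la$ has cardinality $\la$, making $U$ uniform.

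For the backward direction, assume that for every regular $\la \ge \ka$ there is a $\delta$-complete uniform ultrafilter on $\la$. The goal is to extend an arbitrary $\ka$-complete filter $F$ on a set $A$ to a $\delta$-complete ultrafilter, and I would proceed by induction on $\mu = |A|$. The base case $\mu < \ka$ is trivial: by $\ka$-completeness, $\bigcap F$ is nonempty and a principal ultrafilter at any point of $\bigcap F$ works. For $\mu \ge \ka$ with $\cf(\mu) < \ka$, a partition of $A = \mu$ into fewer than $\ka$ intervals of smaller size contains, by $\ka$-completeness, an $F$-positive piece; restricting $F$ there reduces to the induction hypothesis at a smaller cardinality. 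When $\mu$ is singular with $\cf(\mu) \ge \ka$, I would reduce to the regular case at $\cf(\mu) < \mu$ via a cofinal partition, applying the induction hypothesis to the pushforward of $F$ to $\cf(\mu)$ and combining with local extensions of $F$ restricted to each piece.

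The crucial case is $\mu$ regular with $\mu \ge \ka$. Here the hypothesis yields a $\delta$-complete uniform ultrafilter $W$ on $\mu$, but $W$ need not extend the specific filter $F$. To produce an extension, I would fix a choice function $g : \mathcal{P}_\ka(F) \to A$ with $g(b) \in \bigcap b$ for every $b$ (well defined since $\bigcap b \in F$ by $\ka$-completeness), and aim to construct a $\delta$-complete fine ultrafilter $U^*$ on $\mathcal{P}_\ka(F)$. Given such $U^*$, the pushforward $g_* U^* = \{Y \subseteq A : g^{-1}(Y) \in U^*\}$ is a $\delta$-complete ultrafilter extending $F$: for each $X \in F$ the set $\{b \in \mathcal{P}_\ka(F) : X \in b\}$ belongs to $U^*$ by fineness and is mapped by $g$ into $X$, so $X \in g_* U^*$.

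The main obstacle is the construction of $U^*$ itself. The hypothesis supplies uniform ultrafilters on regular cardinals, not fine ultrafilters on $\mathcal{P}_\ka$-style sets, and a naive pushforward from a uniform ultrafilter on a large regular $\la \ge |\mathcal{P}_\ka(F)|$ will not automatically inherit fineness—one needs many prescribed sets of $\mathcal{P}_\ka(F)$ to be simultaneously $U^*$-large. Overcoming this requires a careful encoding of $\mathcal{P}_\ka(F)$ inside a suitable regular $\la$ under which the pushforward simultaneously inherits $\delta$-completeness and fineness. This is the technical core of the backward direction and is an adaptation of Ketonen's classical construction for strongly compact cardinals to the weaker completeness parameter $\delta$.
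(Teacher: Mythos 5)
Your forward direction is a correct and even more elementary route than the paper's (which goes through the elementary-embedding characterization): extending the cobounded filter on a regular $\la$ does yield a uniform $\delta$-complete ultrafilter. One small slip: an $X\in U$ need not \emph{contain} a cobounded set; the right argument is that $X$ cannot be bounded (otherwise its cobounded complement would also lie in $U$), hence $X$ is unbounded and so of size $\la$ by regularity. This is cosmetic and easily repaired.

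The backward direction, however, has a genuine gap: you correctly reduce the problem to producing a $\delta$-complete \emph{fine} ultrafilter on $\p_\ka\la$ (your $U^*$ on $\mathcal{P}_\ka(F)$, pushed forward by a choice function), you correctly observe that a naive pushforward of a uniform ultrafilter will not inherit fineness, and then you stop, declaring the construction of $U^*$ to be ``the technical core'' and ``an adaptation of Ketonen's classical construction.'' That core is the entire content of the theorem's hard direction, and it is exactly what the paper's Section 2 supplies and what your proposal omits. Concretely, the missing chain is: (i) introduce $(\mu,\nu)$-regular ultrafilters and prove that a $\sigma$-complete $U$ is $(\mu,\nu)$-regular iff $\cf^M(\sup(j``\nu))<j(\mu)$ in the ultrapower (the paper's Lemma \ref{5.4}); (ii) show that a $\delta$-complete $(\ka,\la^+)$-regular ultrafilter yields a $\delta$-complete fine ultrafilter on $\pkl$, via a stationary partition of $\{\alpha<\la^+\mid\cf(\alpha)=\om\}$ and the set of indices whose pieces remain stationary below $\sup(i``\la^+)$ (Lemma \ref{5.6}); (iii) upgrade uniform ultrafilters to weakly normal ones (Lemma \ref{5.3}); and (iv) run an induction on regular $\la\ge\ka$, splitting on whether $\{\alpha<\la\mid\cf(\alpha)<\ka\}$ is large and, in the hard case, taking the $U$-sum of $(\ka,\cf(\alpha))$-regular ultrafilters supplied by the induction hypothesis and verifying that the sum represents $\sup(i``\la)$ by a point of small cofinality. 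None of these steps is routine, and the fact that Ketonen's argument survives the weakening from $\ka$-completeness to $\delta$-completeness (with weak normality doing the work that normality does classically) is precisely what has to be checked. Your also-sketched preliminary reductions on $|A|$ are unnecessary once one has fine ultrafilters on every $\pkl$, since that is already one of the standard equivalents of $\delta$-strong compactness quoted in the introduction; the induction that actually matters is the one on regular $\la$ in step (iv), which is absent.
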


In \cite{BM2}, Bagaria and Magidor 
characterized $\om_1$-strongly compact cardinals
in terms of topological spaces.
Let $\mu$ be a cardinal. A topological space $X$ is
\emph{$\mu$-Lindel\"of} if every open cover of $X$ has a 
subcover of size $<\mu$. 
An $\om_1$-Lindel\"of space is called a \emph{Lindel\"of space}.

Bagaria and Magidor 
proved that a cardinal $\ka$ is $\om_1$-strongly compact
if and only if every product space of Lindel\"of spaces is $\ka$-Lindel\"of.
Using Theorem \ref{prop3.3},
we generalize this result as follows:
\begin{thm}\label{thm3}
Let $\delta \le \ka$ be uncountable cardinals.
Then the following are equivalent:
\begin{enumerate}
\item $\ka$ is $\delta$-strongly compact.
\item For every family $\{X_i \mid i \in I\}$ of
$\delta$-Lindel\"of spaces,
the product space $\prod_{i \in I} X_i$ is $\ka$-Lindel\"of.
\end{enumerate}
\end{thm}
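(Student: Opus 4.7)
The plan is to prove $(1) \Rightarrow (2)$ by a fine ultrafilter argument on $\p_\ka(\calU)$ in the spirit of the Bagaria--Magidor proof for the $\delta = \om_1$ case, and to prove $(2) \Rightarrow (1)$ by invoking Theorem \ref{prop3.3}, extracting a $\delta$-complete uniform ultrafilter on each regular $\la \ge \ka$ from a carefully chosen product of $\delta$-Lindel\"of spaces.

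For $(1) \Rightarrow (2)$, I would argue by contradiction. Suppose $\prod_{i \in I} X_i$ has an open cover $\calU$ with no subcover of size less than $\ka$; after refinement, assume each $U \in \calU$ is basic, i.e.\ $U = \prod_{i \in I} U^i$ with $U^i = X_i$ outside a finite $\supp(U) \subseteq I$. The filter on $\p_\ka(\calU)$ generated by the fine sets $\{s \in \p_\ka(\calU) : U \in s\}$ (for $U \in \calU$) is $\ka$-complete, so by $\delta$-strong compactness it extends to a $\delta$-complete fine ultrafilter $D$. For each $s \in \p_\ka(\calU)$, pick $x_s \in \prod_i X_i \setminus \bigcup s$, and for each $i \in I$ let $D_i = (f_i)_{\ast} D$ be the pushforward $\delta$-complete ultrafilter on $X_i$ along $f_i(s) = x_s(i)$.

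The crux is to show each $D_i$ has a limit point $y_i \in X_i$, i.e.\ a point with every open neighborhood in $D_i$. Consider the family of closed sets $\calF_i = \{X_i \setminus V : V \text{ open in } X_i,\ V \notin D_i\}$. This family has the $<\delta$-intersection property: if $V_\xi$ ($\xi < \mu < \delta$) are open and not in $D_i$ while $\bigcup_\xi V_\xi = X_i$, then $\delta$-completeness of $D_i$ together with the ultrafilter property force some $V_\xi \in D_i$, a contradiction. Since $X_i$ is $\delta$-Lindel\"of---equivalently, every family of closed sets with the $<\delta$-intersection property has nonempty total intersection---there is $y_i \in \bigcap \calF_i$, which is then a limit point of $D_i$. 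Setting $y = (y_i)_{i \in I}$ and picking $U_0 \in \calU$ with $y \in U_0$, each $U_0^i$ for $i \in \supp(U_0)$ is a neighborhood of $y_i$, so $U_0^i \in D_i$ and $f_i^{-1}(U_0^i) \in D$. Intersecting these finitely many $D$-sets with $\{s : U_0 \in s\}$ (which is in $D$ by fineness) produces an $s$ with $x_s \in U_0 \in s$, contradicting $x_s \notin \bigcup s$.

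For $(2) \Rightarrow (1)$, by Theorem \ref{prop3.3} it suffices to construct, for each regular $\la \ge \ka$, a $\delta$-complete uniform ultrafilter on $\la$. Following the topological template of Bagaria--Magidor, I would equip $\la$ with a topology making it $\delta$-Lindel\"of (for instance, the one whose nonempty open sets are those of complement of size $<\delta$), take a product indexed so that subsets of $\la$ code coordinates, and build an open cover of the product whose $\ka$-sized subcovers---supplied by the hypothesis---encode a $\delta$-complete uniform ultrafilter on $\la$. The main technical obstacle is the limit-point step in the forward direction, where the interplay between $\delta$-Lindel\"ofness of $X_i$ and $\delta$-completeness of $D_i$ is precisely what forces the pushforward ultrafilter to converge; once that step is established, both directions reduce to careful bookkeeping about basic open sets and fineness.
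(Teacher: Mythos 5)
Your $(1)\Rightarrow(2)$ argument is correct and complete: the fine $\delta$-complete ultrafilter $D$ on $\p_\ka(\calU)$, the choice of $x_s \notin \bigcup s$, the pushforward ultrafilters $D_i$, and the limit-point argument (dualizing $\delta$-Lindel\"ofness to the statement that a family of closed sets, any $<\delta$ of which meet, has nonempty total intersection) all work as you describe. This is essentially the Bagaria--Magidor argument, which the paper does not reproduce but simply cites for this direction.

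The problem is $(2)\Rightarrow(1)$, which is the substantive new content of the theorem, and for which you offer only a plan, not a proof. Saying you would ``build an open cover of the product whose $\ka$-sized subcovers encode a $\delta$-complete uniform ultrafilter on $\la$'' names the goal without supplying the construction, and the specific ingredients you propose (the co-$<\delta$ topology on $\la$ itself, coordinates indexed by subsets of $\la$) are not obviously the right ones --- in particular it is unclear how a \emph{small subcover}, which covers everything, would yield an ultrafilter. What the paper actually does is argue the contrapositive: if $\ka$ is not $\delta$-strongly compact, Theorem \ref{prop3.3} gives a regular $\la \ge \ka$ carrying no $\delta$-complete uniform ultrafilter. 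One then indexes the product by the family of \emph{all partitions $\calA^\alpha$ of $\la$ into fewer than $\delta$ pieces}, takes each factor to be the \emph{discrete} space $\delta_\alpha = \size{\calA^\alpha} < \delta$ (trivially $\delta$-Lindel\"of), and embeds $\la$ diagonally into $X = \prod_{\alpha<2^\la}\delta_\alpha$ via $f_\gamma(\alpha) = $ the index of the piece of $\calA^\alpha$ containing $\gamma$. The key claim is that every $g \in X$ has a basic neighborhood meeting $Y = \{f_\gamma \mid \gamma<\la\}$ in fewer than $\la$ points: otherwise the sets $A^\alpha_{g(\alpha)}$ generate a uniform ultrafilter on $\la$, which by hypothesis fails $\delta$-completeness, i.e.\ some partition of size $<\delta$ has no piece in the ultrafilter --- but that partition appears as some $\calA^\beta$, and $A^\beta_{g(\beta)}$ is in the ultrafilter, a contradiction. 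Covering $X$ by such neighborhoods gives an open cover with no subcover of size $<\la$, so $X$ is not $\ka$-Lindel\"of. Without this (or an equivalent) construction, your proof of the converse is not there.
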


We turn to another topological property,
the tightness.
For a topological space $X$, the \emph{tightness number} $t(X)$ of $X$
is the minimum infinite cardinal $\ka$
such that whenever $A \subseteq X$ and $p \in \overline{A}$ (where $\overline{A}$ is the closure of $A$ in $X$),
there is $B \subseteq A$ with $\size{B} \le \ka$ and $p \in \overline{B}$.
If $t(X)=\om$, $X$ is called a \emph{countably tight space}.

The product of countably tight spaces need not to be countably tight:
A typical example is the sequential fan $S({\om_1})$.
It is a Fr\v echet-Urysohn space,
but the square of $S({\om_1})$ has uncountable tightness.
It is also known that if $\ka$ is a regular uncountable cardinal and
the set $\{\alpha<\ka \mid \cf(\alpha)=\om\}$ has a non-reflecting stationary subset,
then $t(S(\ka)^2) =\ka$ (see 
Eda-Gruenhage-Koszmider-Tamano-Todor\v cevi\'c \cite{EGKTT}).
In particular, under $V=L$, the tightness of the product of two Fr\v echet-Urysohn spaces
can be arbitrary large.

Among these facts, we show that an $\om_1$-strongly compact cardinal
gives an upper bound on the tightness of
the product of two countably tight spaces.

\begin{thm}\label{thm4}
If $\ka$ is $\om_1$-strongly compact,
then $t(X \times Y) \le \ka$ for every countably tight spaces $X$ and $Y$.
\end{thm}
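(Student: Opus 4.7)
I argue by contradiction, leveraging Theorem~\ref{prop3.3}. Suppose for contradiction that $X$ and $Y$ are countably tight, $A \subseteq X \times Y$, and $(x, y) \in \overline{A}$, yet $(x, y) \notin \overline{B}$ for every $B \subseteq A$ with $\size{B} \le \ka$. Replace $A$ by a subset of minimum cardinality $\la$ witnessing $(x, y) \in \overline{A}$; then $\la > \ka$. After a reduction using countable tightness of $X$ and $Y$ to handle the singular case (decomposing $A$ along a cofinal sequence and extracting a contradiction from $(x,y)$ being in the closure of a union of sets that individually miss $(x,y)$), one may assume $\la$ is regular. Enumerate $A = \{a_\alpha : \alpha < \la\}$.

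By Theorem~\ref{prop3.3}, since $\ka$ is $\om_1$-strongly compact and $\la \ge \ka$ is regular, there is an $\om_1$-complete uniform ultrafilter $\calU$ on $\la$. The goal is to use $\calU$ together with the countable tightness of $X$ and $Y$ to produce a countable $N \subseteq \la$ with $(x, y) \in \overline{\{a_\alpha : \alpha \in N\}}$, which contradicts the minimality of $\la$ (as $\om < \la$) and hence establishes the theorem.

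To build $N$, I plan a recursion of length $\om$. At stage $n < \om$, using countable tightness of $X$ (respectively $Y$) applied to the set $\pi_X$ (respectively $\pi_Y$) of $\{a_\alpha\}$ restricted to a $\calU$-large index set carrying the constraints accumulated so far, I select a countable $N_n \subseteq \la$ making $x$ (respectively $y$) a limit of the relevant projection. The $\om_1$-completeness of $\calU$ is essential here: it ensures that the intersection over $n < \om$ of the $\calU$-large index sets used at each stage remains in $\calU$, so the stages can be chained coherently and none of them is vacuous. Setting $N := \bigcup_{n < \om} N_n$ yields a countable index set, and the recursive design of $N_n$ is meant to force that for every open $U \ni x$ and open $V \ni y$, some $\alpha \in N$ has $a_\alpha \in U \times V$.

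\textbf{Main obstacle.} The delicate point is making the countable recursion genuinely certify $(x,y) \in \overline{\{a_\alpha : \alpha \in N\}}$. A priori there can be uncountably many neighborhood pairs $(U, V)$ to verify, while $N$ is countable. What saves the argument is that countable tightness of $X$ and $Y$ reduces each closure-check to a countable list of tests in each coordinate, and the $\om_1$-completeness of $\calU$ allows these countably many tests to be threaded together so that they all get satisfied within $\om$ stages. Arranging the recursion precisely---so that the $\calU$-large set carried forward at each stage encodes exactly the neighborhood conditions needed at the next stage---is the combinatorial heart of the argument and the place where the hypothesis that $\ka$ is $\om_1$-strongly compact is truly used.
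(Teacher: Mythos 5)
Your proposal has a genuine gap at its center. The entire content of the theorem lies in the step you label the ``main obstacle'' and then leave unresolved: you never exhibit a mechanism that converts countable tightness of the two factors, applied coordinatewise, into membership of $(x,y)$ in the closure of a subset of $A$ in the \emph{product}. The paper's proof (Proposition \ref{prop4.2}) runs on exactly one such mechanism, Lemma \ref{4.1}: if $U$ is a $\sigma$-complete ultrafilter over $S$ and $\{O_s \mid s\in S\}$ are open sets in a countably tight space, then the ``$U$-limit'' $O=\{x \mid \{s \mid x\in O_s\}\in U\}$ is again open. With that lemma the paper takes a $\sigma$-complete \emph{fine} ultrafilter on $\p_\ka(X\times Y)$, assumes $\{s \mid (x,y)\notin\overline{A\cap s}\}$ is large, extracts witnessing open boxes, forms their $U$-limits (an open neighborhood of $(x,y)$), and derives a contradiction from fineness; no recursion, no countable set, and no appeal to Theorem \ref{prop3.3} is needed. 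Your recursion has no analogue of this lemma, and $\om_1$-completeness alone does not explain how countably many ``tests'' performed in each factor separately control the possibly uncountably many neighborhood pairs $(U,V)$ of the product point.

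There are two further problems. First, the reduction to regular $\la$ is not a proof: a point can lie in the closure of $\bigcup_i A_i$ while lying in the closure of no single $A_i$, so the ``contradiction'' you describe for singular $\la$ is not one, and the minimal witnessing cardinal need not be regular in an arbitrary space. Second, your intermediate target --- a \emph{countable} $N$ with $(x,y)\in\overline{\{a_\alpha \mid \alpha\in N\}}$ --- is far stronger than required and is not what the ultrafilter delivers. Your overall route (a uniform $\om_1$-complete ultrafilter $\calU$ on a minimal regular $\la>\ka$, via Theorem \ref{prop3.3}) can in fact be salvaged, but differently: by minimality choose for each $\alpha<\la$ an open box $U_\alpha\times V_\alpha\ni(x,y)$ disjoint from $\{a_\beta \mid \beta<\alpha\}$, apply Lemma \ref{4.1} to form the open $\calU$-limits $O=\{x'\mid\{\alpha \mid x'\in U_\alpha\}\in\calU\}$ and $W=\{y'\mid\{\alpha \mid y'\in V_\alpha\}\in\calU\}$, pick $a_\beta\in (O\times W)\cap A$, and use uniformity of $\calU$ to find $\alpha>\beta$ with $a_\beta\in U_\alpha\times V_\alpha$, a contradiction. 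Even that repaired argument hinges on Lemma \ref{4.1} (or an equivalent), which is precisely the idea missing from your write-up.
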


We also show that an $\om_1$-strongly compact cardinal is an \emph{exact} upper bound in the Cohen forcing extension.
\begin{thm}\label{thm5}
Let $\bbC$ be the Cohen forcing notion, and $G$ be $(V, \bbC)$-generic.
Then for every cardinal $\ka$ 
the following are equivalent  in $V[G]$:
\begin{enumerate}
\item $\ka$ is $\om_1$-strongly compact.
\item For every countably tight spaces $X$ and $Y$
we have  $t(X \times Y) \le \ka$.
\item For every countably tight Tychonoff spaces $X$ and $Y$
we have  $t(X \times Y) \le \ka$.
\end{enumerate}
\end{thm}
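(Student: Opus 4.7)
The plan is to handle $(1)\Rightarrow(2)$ and $(2)\Rightarrow(3)$ first and then focus on the Cohen-specific direction $(3)\Rightarrow(1)$. Working in $V[G]$, $(1)\Rightarrow(2)$ is exactly Theorem~\ref{thm4}, and $(2)\Rightarrow(3)$ is immediate because every countably tight Tychonoff space is in particular a countably tight space. The substantive content is $(3)\Rightarrow(1)$, which I would prove by contrapositive.

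Assume $\ka$ is not $\om_1$-strongly compact in $V[G]$. The goal is to produce countably tight Tychonoff spaces $X$ and $Y$ in $V[G]$ with $t(X \times Y) > \ka$. By Theorem~\ref{prop3.3} there is a regular $\la \ge \ka$ with no $\om_1$-complete uniform ultrafilter on $\la$ in $V[G]$; after replacing $\la$ by a larger regular cardinal if necessary (failure of $\om_1$-strong compactness persists at cofinally many regular cardinals), we may arrange $\la > \ka$. The natural target is the sequential fan: $X = Y = S(\la)$ is a Fr\v echet-Urysohn (hence countably tight) Tychonoff space, so it suffices to verify $t(S(\la)^2) > \ka$ in $V[G]$. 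By the theorem of Eda-Gruenhage-Koszmider-Tamano-Todor\v cevi\'c \cite{EGKTT} quoted in the introduction, this reduces to exhibiting a non-reflecting stationary subset of $\{\alpha < \la : \cf(\alpha) = \om\}$ in $V[G]$.

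Producing this non-reflecting stationary set is the main obstacle and is precisely where the Cohen hypothesis enters. The plan is first to note, by a Levy-Solovay-type argument, that Cohen forcing (being countable) cannot create $\om_1$-complete uniform ultrafilters---indeed, if $\calU \in V[G]$ is an $\om_1$-complete uniform ultrafilter on $\la$, then $\calU \cap V$ is one in $V$---so $\la$ already lacks such an ultrafilter in $V$. Then, using the generic Cohen real, one defines a candidate stationary set $S \subseteq \{\alpha < \la : \cf(\alpha) = \om\}$ in $V[G]$ and shows by a density/naming argument that if $S$ were to reflect at some $\beta < \la$ with $\cf(\beta) > \om$, one could combine the reflecting club with the genericity of the Cohen real to reconstruct an $\om_1$-complete uniform ultrafilter on $\la$ in $V$, contradicting the ground-model failure. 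Setting up the correct coding of $S$ by the Cohen real so that hypothetical reflection genuinely produces such a reconstruction is the technical heart of the proof and the step I expect to require the most care.
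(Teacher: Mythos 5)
Your treatment of (1) $\Rightarrow$ (2) $\Rightarrow$ (3) matches the paper (Proposition \ref{prop4.2} plus the triviality of restricting to Tychonoff spaces), but your plan for (3) $\Rightarrow$ (1) has a genuine gap that I do not believe can be repaired along the lines you describe. Everything hinges on producing, in $V[G]$, a non-reflecting stationary subset of $\{\alpha<\la \mid \cf(\alpha)=\om\}$ for some regular $\la>\ka$, so as to invoke the computation $t(S(\la)^2)=\la$ from \cite{EGKTT}. There are two problems. First, Cohen forcing cannot create such a set: if $\dot S$ names a non-reflecting stationary subset of $\la$ in $V[G]$, then $S\subseteq\bigcup_{q\in G}\{\alpha \mid q\Vdash\alpha\in\dot S\}$ is a countable union, so for some $q\in G$ the ground-model set $S_q=\{\alpha \mid q\Vdash\alpha\in\dot S\}$ is stationary in $V[G]$ (hence in $V$, since $V$-clubs remain clubs) and is contained in $S$; moreover each $S_q\cap\beta$ with $\cf(\beta)>\om$ is non-stationary already in $V$, because every club of $\beta$ in $V[G]$ contains one from $V$ when the forcing is countable. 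So the generic real genuinely cannot supply the combinatorial object you need, and your strategy would have to prove outright in $V$ that the failure of $\om_1$-strong compactness of $\ka$ yields a non-reflecting stationary set at some regular $\la\ge\ka$. Second, no such ZFC implication is available: the known result of Bagaria--Magidor goes only in the opposite direction ($\om_1$-strong compactness implies stationary reflection above $\ka$), and extracting a non-reflecting stationary set from the mere absence of a $\sigma$-complete uniform ultrafilter is not a theorem one can expect from a ``density/naming argument.'' The step you defer as requiring care is in fact the missing content, and it is where the approach fails.

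For contrast, the paper's proof of (3) $\Rightarrow$ (1) uses the Cohen real in a completely different way. Working in $V$ with $\la$ such that $\pkl$ carries no $\sigma$-complete fine ultrafilter, the generic real is used to split the compact space of fine ultrafilters over $\pkl$ into two regular $T_1$ Lindel\"of spaces $X_0$, $X_1$ all of whose finite powers are Lindel\"of but with $L(X_0\times X_1)\ge\ka$ (Proposition \ref{4.4}, imported from \cite{U1}); the Arhangel'ski\u \i--Pytkeev theorem (Theorem \ref{cp}) then converts this into countably tight Tychonoff spaces $C_p(X_0)$ and $C_p(X_1)$ with $t(C_p(X_0)\times C_p(X_1))=t(C_p(X_0\oplus X_1))\ge L((X_0\oplus X_1)^2)\ge\ka$ (Proposition \ref{4.6}). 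If you want to keep your outline, you would need to replace the sequential fan by counterexamples of this $C_p$ type; the fan route is closed off for the reasons above.
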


Here we present some definitions and facts
which will be used later.
Throughout this paper, a filter means a proper filter.
\begin{define}
For an uncountable cardinal $\ka$ and a set $A$,
let $\p_\ka A=\{x \subseteq A \mid \size{x}<\ka\}$.
A filter $F$ over $\p_\ka A$ is \emph{fine}
if for every $a \in A$,
we have $\{x \in \p_\ka A\mid a \in x\} \in F$.
\end{define}
\begin{thm}[\cite{BM1, BM2}]
For  uncountable cardinals $\delta \le \ka$,
the following are equivalent:
\begin{enumerate}
\item $\ka$ is $\delta$-strongly compact.
\item For every cardinal $\la \ge \ka$,
there exists a $\delta$-complete fine ultrafilter over $\pkl$.
\item For every set $A$ with $\size{A} \ge \ka$,
there exists a $\delta$-complete fine ultrafilter over $\p_\ka A$.
\item For every cardinal $\la \ge \ka$,
there exists a definable elementary embedding $j: V \to M$ into some transitive model $M$
such that $\delta \le \mathrm{crit}(j) \le \ka$
and there is a set $A \in M$ with $\size{A}^{M}<j(\ka)$ and $j``\la \subseteq A$
(where $\mathrm{crit}(j)$ denotes the critical point of $j$).
\end{enumerate}  
\end{thm}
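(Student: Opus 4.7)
The plan is to run the cycle $(1) \Rightarrow (4) \Rightarrow (2) \Rightarrow (3) \Rightarrow (1)$, which simultaneously establishes all four equivalences.

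For $(1) \Rightarrow (4)$, fix $\la \ge \ka$ and consider the filter on $\pkl$ generated by the fineness sets $X_\alpha = \{x \in \pkl \mid \alpha \in x\}$ for $\alpha < \la$. Since $\bigcap_{\alpha \in y} X_\alpha = \{x \mid y \subseteq x\}$ is nonempty whenever $y \in \pkl$, this filter is $\ka$-complete and fine, so by (1) it extends to a $\delta$-complete ultrafilter $U$. Form $j \colon V \to M = \ult(V, U)$ (the ultrapower is well-founded because $\delta \ge \om_1$) and set $A = [\mathrm{id}]_U \in M$. Then $\delta \le \crit(j)$ by $\delta$-completeness, $M \models |A| < j(\ka)$ by \L o\'s, and $X_\alpha \in U$ yields $j(\alpha) \in A$, so $j``\la \subseteq A$. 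Finally, $\crit(j) \le \ka$: otherwise $j(\ka) = \ka$, whence $|A|^V \le |A|^M < \ka$ by transitivity of $M$, contradicting $|A|^V \ge |j``\la|^V = \la \ge \ka$.

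For $(4) \Rightarrow (2)$, given $j, M, A$ as in (4), replace $A$ by $A \cap j(\la) \in M$ so that $A \in j(\pkl)$ in $M$ (the containment $j``\la \subseteq A$ is preserved). Set $U = \{X \subseteq \pkl \mid A \in j(X)\}$; $\delta$-completeness follows from $\crit(j) \ge \delta$ (so $j$ fixes any index sequence of length $<\delta$), and fineness from $j(X_\alpha) = \{x \in j(\pkl) \mid j(\alpha) \in x\}$ containing $A$ for every $\alpha < \la$. For $(2) \Rightarrow (3)$, given $A$ with $|A| \ge \ka$, set $\la = |A|$, fix a bijection $f \colon \la \to A$, and push a fine $\delta$-complete ultrafilter on $\p_\ka \la$ along the induced bijection $x \mapsto f[x] \colon \p_\ka \la \to \p_\ka A$; fineness at $a = f(\alpha)$ transports from fineness at $\alpha$.

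For $(3) \Rightarrow (1)$, let $F$ be a $\ka$-complete filter on a set $X$. If $|F| < \ka$, then $\bigcap F \in F$ by $\ka$-completeness, and any principal ultrafilter at a point of $\bigcap F$ extends $F$. Otherwise $|F| \ge \ka$, and (3) supplies a fine $\delta$-complete ultrafilter $U$ on $\p_\ka F$. Since $F$ is $\ka$-complete, $\bigcap x \in F$ is nonempty for each $x \in \p_\ka F$, so choose $g(x) \in \bigcap x$ and define $U^* = \{Y \subseteq X \mid g^{-1}(Y) \in U\}$, a $\delta$-complete ultrafilter on $X$. For $Y \in F$, fineness gives $\{x \in \p_\ka F \mid Y \in x\} \in U$, and $Y \in x$ forces $g(x) \in \bigcap x \subseteq Y$, so $g^{-1}(Y) \supseteq \{x \mid Y \in x\}$ lies in $U$, placing $Y$ in $U^*$; hence $U^* \supseteq F$.

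The main obstacle is the critical-point upper bound $\crit(j) \le \ka$ in $(1) \Rightarrow (4)$: the hypothesis $|A|^M < j(\ka)$ is internal to $M$, so one must use transitivity of $M$ to convert it to $V$-cardinality and then compare against $|j``\la|^V = \la$. The remaining implications reduce to standard manipulations of fineness filters, bijective push-forwards, and the $g$-projection that extracts an ultrafilter on $X$ from one on $\p_\ka F$.
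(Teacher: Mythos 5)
Your argument is correct. A point of comparison worth noting up front: the paper states this theorem as a cited result of Bagaria--Magidor and gives no proof of its own, so there is nothing internal to measure your write-up against; what you have produced is essentially the standard argument from \cite{BM1, BM2}. The cycle $(1)\Rightarrow(4)\Rightarrow(2)\Rightarrow(3)\Rightarrow(1)$ is the usual decomposition, and each step checks out: the extension of the fine $\ka$-complete filter on $\pkl$ and the identification $A=[\mathrm{id}]_U$, the derived ultrafilter $\{X \mid A\cap j(\la) \in j(X)\}$, the bijective transport to $\p_\ka A$, and the projection $g(x)\in\bigcap x$ in $(3)\Rightarrow(1)$ (including the separate treatment of small filters via principal ultrafilters). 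Your handling of $\crit(j)\le\ka$ via transitivity of $M$ is the right way to close that gap. Two cosmetic remarks: in $(1)\Rightarrow(4)$ you should say you take the \emph{$\ka$-complete} filter generated by the sets $X_\alpha$ (closure under intersections of size $<\ka$, which stays proper since $\bigcap_{\alpha\in y}X_\alpha\ni y$ for $y\in\pkl$), rather than the filter generated under finite intersections, which need not be literally $\ka$-complete; and one should note that $U$ is nonprincipal (it is fine and $\la\ge\ka$), so that $\crit(j)$ exists at all.
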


\begin{thm}[\cite{BM1, BM2}]
If $\ka$ is $\delta$-strongly compact,
then there is a measurable cardinal $\le \ka$.
\end{thm}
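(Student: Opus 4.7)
The plan is to invoke characterization (4) of the preceding theorem to obtain an elementary embedding whose critical point will serve as the desired measurable cardinal.

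First, I would apply item (4) of that theorem with $\la = \ka$ to obtain a definable elementary embedding $j : V \to M$ into a transitive class $M$ with $\delta \le \mu := \crit(j) \le \ka$. Alternatively, one can start from item (3): fix a $\delta$-complete fine ultrafilter $U$ on $\p_\ka \ka$ and form the ultrapower $j_U : V \to \ult(V, U)$. Because $\delta$ is uncountable, $U$ is $\sigma$-complete, so the ultrapower is well-founded and may be identified with its Mostowski collapse. The $\delta$-completeness of $U$ forces $\crit(j_U) \ge \delta$, while fineness (together with the fact that $[\mathrm{id}]_U$ represents an element of $\p_{j(\ka)}(j(\ka))$ inside the ultrapower) forces $\crit(j_U) \le \ka$.

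Second, I would define the derived measure on $\mu$ by
$$W = \{ X \subseteq \mu \mid \mu \in j(X) \}.$$
A standard verification shows that $W$ is a non-principal $\mu$-complete ultrafilter on $\mu$: non-principality follows from $j(\{\alpha\}) = \{\alpha\}$ for every $\alpha < \mu$; the ultrafilter property from the elementarity of $j$ applied to $\p(\mu)$; and $\mu$-completeness from the fact that any sequence $\seq{X_\alpha : \alpha < \nu}$ with $\nu < \mu$ is fixed pointwise by $j$, so $\mu \in j(\bigcap_{\alpha < \nu} X_\alpha) = \bigcap_{\alpha < \nu} j(X_\alpha)$ if and only if $\mu \in j(X_\alpha)$ for each $\alpha < \nu$. (In fact $W$ is normal, but only measurability is needed.)

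Therefore $\mu$ is a measurable cardinal with $\mu \le \ka$, proving the statement. The argument is entirely routine once the preceding characterization theorem is available; no serious obstacle is expected, since the step from a $\delta$-complete ultrafilter on a large set to a normal measure on the critical point of the induced ultrapower is standard large-cardinal fare.
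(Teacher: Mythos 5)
Your proof is correct. The paper states this theorem without proof, citing Bagaria--Magidor, and your argument --- taking the embedding $j$ from characterization (4) (or the ultrapower by a $\delta$-complete fine ultrafilter from (3)), noting $\delta \le \mu := \crit(j) \le \ka$, and verifying that the derived filter $W=\{X \subseteq \mu \mid \mu \in j(X)\}$ is a non-principal $\mu$-complete ultrafilter --- is exactly the standard route and what the cited sources do; the only wording to tighten is that the sequence $\seq{X_\alpha \mid \alpha<\nu}$ is not fixed pointwise by $j$, but rather $j$ maps it to $\seq{j(X_\alpha) \mid \alpha<\nu}$ because the index set $\nu<\crit(j)$ is fixed, which is all the completeness argument needs.
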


\section{On uniform ultrafilters}
In this section we give a proof of Theorem \ref{prop3.3}.
It can be obtained by a series of arguments in Ketonen \cite{K}
with some modifications.

\begin{fact}
Let $j:V \to M$ be a definable elementary embedding into some transitive model $M$.
Let $A$ be a non-empty set,
and let $s \in j(A)$.
Define the family $U \subseteq \p(A)$ as follows. For $X \subseteq A$,
\begin{align*}
X \in U \text{ if and only if } s \in j(X).
\end{align*}
Then $U$ is a $\mathrm{crit}(j)$-complete (possibly principal) ultrafilter over $A$.
Moreover, if $A$ is a regular uncountable cardinal $\la$ and 
$s=\sup(j``\la) \in j(\la)$,
then $U$ is a uniform ultrafilter.
\end{fact}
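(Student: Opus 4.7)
The plan is to verify four claims in sequence: that $U$ is a filter, that it is an ultrafilter, that it is $\crit(j)$-complete, and finally, under the extra hypothesis on $A$ and $s$, that it is uniform. Each step reduces to an application of the elementarity of $j$ together with the defining condition $X \in U \iff s \in j(X)$.

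For the filter properties I would use that $j$ preserves the Boolean operations on $\p(A)$. Since $s \in j(A)$ we get $A \in U$, while $j(\emptyset) = \emptyset$ contains no element, so $\emptyset \notin U$. Upward closure follows from $X \subseteq Y \Rightarrow j(X) \subseteq j(Y)$, and closure under binary intersection from $j(X \cap Y) = j(X) \cap j(Y)$. The ultrafilter property is then immediate from $j(A \setminus X) = j(A) \setminus j(X)$: since $s \in j(A)$, exactly one of $j(X)$, $j(A \setminus X)$ contains $s$.

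For $\crit(j)$-completeness, let $\ga < \crit(j)$ and $\seq{X_\alpha : \alpha < \ga} \subseteq U$. Because $\ga$ and every $\alpha < \ga$ are fixed by $j$, applying $j$ to this sequence gives a sequence in $M$ whose $\alpha$-th term is $j(X_\alpha)$, and elementarity then yields
\[
j\Bigl( \bigcap_{\alpha < \ga} X_\alpha \Bigr) = \bigcap_{\alpha < \ga} j(X_\alpha).
\]
Since $s$ lies in each $j(X_\alpha)$ by hypothesis, $s$ lies in the intersection, so $\bigcap_{\alpha<\ga} X_\alpha \in U$. This is the one spot where the critical-point hypothesis is essential, and it is the main (mild) obstacle: one has to be precise that $j$ of the indexed family agrees pointwise with the family of images, which requires $\ga < \crit(j)$.

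For uniformity, assume $A = \la$ is a regular uncountable cardinal and $s = \sup(j``\la)$. Let $X \subseteq \la$ with $\size{X} < \la$; by regularity of $\la$, the ordinal $\alpha := \sup(X) + 1$ is below $\la$, and $X \subseteq \alpha$. Then $j(X) \subseteq j(\alpha)$, and $j(\alpha) \in j``\la$ is an ordinal strictly less than $\sup(j``\la) = s$. Hence $s \notin j(X)$, so $X \notin U$, proving that every element of $U$ has size $\la$. Beyond the standard care needed in the completeness step, everything else is a direct unwinding of the definitions.
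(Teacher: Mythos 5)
Your proof is correct. The paper states this as a Fact with no proof supplied, and your argument --- elementarity of $j$ for the Boolean and filter properties, applying $j$ to the sequence $\seq{X_\alpha \mid \alpha<\ga}$ for $\ga<\crit(j)$ to get completeness, and using regularity of $\la$ to put any $X$ of size $<\la$ below some $\alpha<\la$ so that $j(X) \subseteq j(\alpha) < \sup(j``\la)=s$ --- is precisely the standard verification the author omits.
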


\begin{lemma}\label{2.2+}
Suppose $\ka$ is $\delta$-strongly compact for some uncountable $\delta \le \ka$.
Then for every regular $\la \ge \ka$
there exists a $\delta$-complete uniform ultrafilter over $\la$.
\end{lemma}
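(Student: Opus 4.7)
The plan is to apply characterization (4) of $\delta$-strong compactness from the theorem stated just before this lemma, and then invoke the Fact to produce the desired ultrafilter from an appropriate point $s$ in $j(\la)$. The key point is to exhibit some $s \in j(\la)$ whose associated ultrafilter is uniform, and the Fact tells us that $s = \sup(j``\la)$ is the right choice provided this supremum actually lies below $j(\la)$.

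More precisely, I would first fix a regular $\la \ge \ka$ and apply (4) to obtain a definable elementary embedding $j : V \to M$ with $\delta \le \crit(j) \le \ka$ together with a set $A \in M$ satisfying $\size{A}^M < j(\ka)$ and $j``\la \subseteq A$. Replacing $A$ by $A \cap j(\la)$, which still lies in $M$, I may assume $A \subseteq j(\la)$. Since $\la$ is regular in $V$, elementarity gives that $j(\la)$ is regular in $M$, and we have $\size{A}^M < j(\ka) \le j(\la)$. Therefore $A$ is bounded in $j(\la)$ from the viewpoint of $M$, and hence from the viewpoint of $V$ as well. Setting $s := \sup(j``\la)$, we obtain $s \le \sup(A) < j(\la)$, so $s \in j(\la)$.

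With $s$ in hand, I would apply the Fact to the regular uncountable cardinal $\la$ and the point $s = \sup(j``\la) \in j(\la)$. The Fact produces an ultrafilter
\[
U = \{ X \subseteq \la \mid s \in j(X) \}
\]
on $\la$ which is $\crit(j)$-complete and uniform. Since $\delta \le \crit(j)$, completeness upgrades to $\delta$-completeness, and $U$ is the desired $\delta$-complete uniform ultrafilter over $\la$.

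The only genuine step of substance is verifying $\sup(j``\la) < j(\la)$, and this is precisely where the extra information supplied by clause (4), namely the existence of a covering set $A \in M$ of $M$-cardinality less than $j(\ka)$, is used; without it, a generic elementary embedding witnessing $\delta$-completeness need not have bounded image on $\la$. Everything else is a direct appeal to the Fact and to $\delta \le \crit(j)$.
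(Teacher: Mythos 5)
Your proof is correct and follows essentially the same route as the paper: obtain $j:V\to M$ from characterization (4), intersect the covering set with $j(\la)$, use regularity of $j(\la)$ in $M$ together with $\size{A}^M<j(\ka)\le j(\la)$ to conclude $\sup(j``\la)<j(\la)$, and then take $U=\{X\subseteq\la\mid \sup(j``\la)\in j(X)\}$. If anything, your version is slightly more careful than the paper's, which writes $j(\la)>j(\ka)$ (true only when $\la>\ka$) where your $j(\ka)\le j(\la)$ covers the case $\la=\ka$ as well.
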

\begin{proof}
Fix a regular $\la \ge \ka$,
and take an elementary embedding $j:V \to M$
such that $\delta \le \mathrm{crit}(j)\le \ka$, and
there is $A \in M$ with $j``\la \subseteq A \subseteq j(\la)$ and
$\size{A}^M<j(\ka)$.
Then we have $\sup(j``\la)<j(\la)$ because $j(\la)$ is regular in $M$ and $j(\la)>j(\ka)$.
Now define the ultrafilter $U$ over $\la$ by, for $X \subseteq \la$, 
\[
X \in U \text{ if and only if } \sup(j``\la) \in j(X).
\]
$U$ is a $\delta$-complete uniform ultrafilter over $\la$.
\end{proof}
For the converse direction, we need several definitions and lemmas.

Let $U$ be a $\sigma$-complete ultrafilter over some set $A$.
Let $\mathrm{Ult}(V, M)$ denote the ultrapower of $V$ by $U$,
and identify the ultrapower with its transitive collapse $M$.
Let $j:V \to M \approx \mathrm{Ult}(V, U)$ be the elementary embedding induced by $U$.
It is known that  $\mathrm{crit}(j) \ge \delta$ if and only if $U$ is $\delta$-complete.
%Let $\mathrm{crit}(j)$ denote the critical point of $j$.
Let $id_A$ denote the identity map on $A$,
and for a function $f$ on $A$, let $[f]_U \in M$ denote the equivalence class of $f$ modulo $U$.
We know $[f]_U=j(f)([id_A]_U)$ because
\begin{align*}
[g]_U \in [f]_U & \iff \{x \in A \mid g(x) \in f(x)\} \in U\\
& \iff \{x \in A \mid g(x) \in f(id_A(x))\} \in U\\
& \iff [g]_U \in j(f)([id_A]_U).
\end{align*}

\begin{define}
Let $\mu$, $\nu$ be cardinals with $\mu \le \nu$.
An ultrafilter $U$ over some set $A$ is said to be \emph{$(\mu, \nu)$-regular}
if there is a family $\{X_\alpha \mid \alpha<\nu\}$ of sets in $U$
such that for every $a \in [\nu]^\mu$,
we have $\bigcap_{\alpha \in a} X_\alpha=\emptyset$.
\end{define}
We note that if $\nu$ is regular and $U$ is $(\mu, \nu)$-regular,
then $\size{X} \ge \nu$ for every $X \in U$.

\begin{lemma}\label{5.4}
Let $\mu \le \nu$ be cardinals where $\nu$ is regular.
Let $U$ be a $\sigma$-complete ultrafilter over some set $A$, and 
$j: V \to M \approx \mathrm{Ult}(V, U)$  the elementary embedding induced by $U$.
Then $U$ is $(\mu, \nu)$-regular if and only if $\cf^M(\sup(j``\nu))<j(\mu)$.
\end{lemma}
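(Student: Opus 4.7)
The plan is to reformulate $(\mu,\nu)$-regularity as an ultrapower covering condition and then handle each implication. Write $M=\ult(V,U)$ with embedding $j:V\to M$, set $\gamma=\sup(j``\nu)$, and recall that $X\in U$ iff $[\mathrm{id}_A]_U\in j(X)$. A family $\{X_\alpha:\alpha<\nu\}\subseteq U$ witnesses $(\mu,\nu)$-regularity exactly when $F:A\to\p_\mu\nu$ defined by $F(x)=\{\alpha:x\in X_\alpha\}$ satisfies $\size{F(x)}<\mu$ (the empty-intersection condition on $[\nu]^\mu$ translates precisely to this bound). By \L{}o\'s's theorem, $\{x:\alpha\in F(x)\}\in U$ iff $j(\alpha)\in[F]_U$, so the question reduces to the existence of $Y=[F]_U\in M$ with $Y\subseteq j(\nu)$, $\size{Y}^M<j(\mu)$, and $j``\nu\subseteq Y$.

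The forward direction is then immediate: such a $Y$ gives $Y\cap\gamma\supseteq j``\nu$ as a cofinal subset of $\gamma$ of $M$-cardinality less than $j(\mu)$, hence $\cf^M(\gamma)<j(\mu)$.

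For the converse I would first observe that $\cf^V(\gamma)=\nu$. The inequality $\cf^V(\gamma)\le\nu$ is witnessed by $j``\nu$ itself; conversely, if some $V$-cofinal sequence $\langle\delta_\xi:\xi<\sigma\rangle$ in $\gamma$ had length $\sigma<\nu$, then picking $\alpha_\xi<\nu$ with $j(\alpha_\xi)>\delta_\xi$ would yield a subset of $\nu$ of $V$-cardinality at most $\sigma<\nu$ that is cofinal in $\nu$ (if it were bounded by some $\alpha^*<\nu$, then $j(\alpha^*)\ge\gamma$, a contradiction), violating regularity of $\nu$. Since any $M$-cofinal sequence in $\gamma$ is also $V$-cofinal with $V$-cardinality at most its $M$-cardinality, I obtain $\nu=\cf^V(\gamma)\le\cf^M(\gamma)<j(\mu)$, so in particular $\nu<j(\mu)$.

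Next I fix an $M$-cofinal strictly increasing continuous $h:\rho\to\gamma$ of length $\rho=\cf^M(\gamma)<j(\mu)$; by \L{}o\'s, $h=[H]_U$ with $H(x):\rho(x)\to g(x)$ strictly increasing continuous cofinal, $\rho(x)<\mu$, and $[g]_U=\gamma$. When $g(x)<\mu$ holds for $U$-almost every $x$ (equivalently, $\gamma<j(\mu)$), I take $F(x)=g(x)$ as a set of ordinals; then $\size{F(x)}<\mu$, and for each $\alpha<\nu$ we have $j(\alpha)<\gamma=[g]_U$, so $\{x:\alpha\in F(x)\}\in U$ and $j``\nu\subseteq[F]_U$. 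The main obstacle is the remaining subcase $\gamma\ge j(\mu)$: here one must use the bound $\nu<j(\mu)$ together with the cofinal structure of $H(x)$ to build a set $F(x)\subseteq g(x)$ of $V$-cardinality less than $\mu$ whose ultrafilter class still captures every $j(\alpha)$. This delicate construction, selecting representatives within the intervals determined by $H(x)$ so that in the ultrapower each $j(\alpha)<\gamma$ is hit, is the heart of the argument.
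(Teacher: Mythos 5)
Your forward direction is correct and is essentially the paper's own argument in a different dress: your set $[F]_U$ is exactly the set $a=\{\alpha<\sup(j``\nu)\mid [id_A]_U\in Y_\alpha\}$ that the paper forms from the witnessing family, and both arguments conclude by observing that this is a cofinal subset of $\sup(j``\nu)$ lying in $M$ and of $M$-cardinality below $j(\mu)$. Your reformulation of $(\mu,\nu)$-regularity as the existence of $Y\in M$ with $j``\nu\subseteq Y\subseteq j(\nu)$ and $\size{Y}^M<j(\mu)$ is also correct and is a reasonable organizing device.

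The converse direction, however, has a genuine gap. You complete only the subcase $\sup(j``\nu)<j(\mu)$, which is degenerate (it already fails whenever $\mu$ is at most the critical point of $j$, since then $j(\mu)=\mu\le\nu\le\sup(j``\nu)$), and you explicitly defer the remaining case as ``the heart of the argument'' without carrying it out. Saying that one should ``select representatives within the intervals determined by $H(x)$ so that each $j(\alpha)$ is hit'' states the goal rather than giving a construction; in particular it is not clear from your sketch how to arrange $j(\alpha)\in[F]_U$ for \emph{every} $\alpha<\nu$ simultaneously while keeping $\size{F(x)}<\mu$. The paper supplies precisely this missing content, and by a slightly different device: writing $[f]_U=\sup(j``\nu)$, the hypothesis gives $Z=\{x\mid \cf(f(x))<\mu\}\in U$; one fixes cofinal $c_x\subseteq f(x)$ of order type $\cf(f(x))<\mu$ and then constructs by recursion a strictly increasing sequence $\seq{\nu_i\mid i<\nu}$ in $\nu$ with $X_i=\{x\in Z\mid [\nu_i,\nu_{i+1})\cap c_x\neq\emptyset\}\in U$ for all $i<\nu$. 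The recursion step is where the work happens: $c_{[id_A]_U}$ is cofinal in $\sup(j``\nu)$, so above $j(\nu_l)$ there is some $\xi\in c_{[id_A]_U}$, and since $\xi<\sup(j``\nu)$ one can choose $\nu_{l+1}<\nu$ with $\xi<j(\nu_{l+1})$, which by {\L}o\'s's theorem puts $X_l$ in $U$. Regularity then follows because any $x$ lying in $\mu$ many of the $X_i$ would meet $\mu$ many disjoint intervals with $c_x$, forcing $\size{c_x}\ge\mu$. Note that this route never needs your auxiliary observation that $\nu\le\cf^M(\sup(j``\nu))$, hence $\nu<j(\mu)$; the only cardinality bound it uses is $\size{c_x}<\mu$, obtained pointwise. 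Until you supply a construction of this kind, the right-to-left implication is unproven.
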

\begin{proof}
%Note that $\sup(j``\la) \in j(f)^{-1}``j(X)$ for every $X \in U$.

First suppose $U$ is $(\mu, \nu)$-regular, and
let $\{X_\alpha \mid \alpha<\nu\}$ be a witness.
Let $j(\{X_\alpha \mid \alpha<\nu\})=
\{Y_\alpha \mid \alpha<j(\nu)\}$.
Let $a=\{\alpha<\sup(j``\nu) \mid [id_A]_U \in Y_\alpha\} \in M$.
For $\alpha<\nu$, 
since $X_\alpha \in U$, we have $[id_A]_U \in j(X_\alpha)=Y_{j(\alpha)}$.
Thus we know $j``\nu \subseteq a$.
Hence $a$ is unbounded in $\sup(j``\nu)$,
and therefore $\cf^M(\sup(j``\nu))\le \size{a}^M$.
By the choice of $a$, we have $\bigcap_{\alpha \in a} Y_\alpha \neq \emptyset$.
Since $j(U)$ is $(j(\mu), j(\nu))$-regular in $M$ witnessed by the family $\{Y_\alpha \mid \alpha<j(\nu)\}$,
we have $\size{a}^M<j(\mu)$, and $\cf^M(\sup(j``\nu))<j(\mu)$.

For the converse, suppose $\cf^M(\sup(j``\nu))<j(\mu)$.
Take a function $f:A \to \nu+1$ such that
$[f]_U=j(f)([id_A]_U)=\sup(j``\nu)$ in $M$.
Then $Z=\{x \in A \mid \cf(f(x))<\mu\} \in U$.
% We have $\sup(j``\nu) \in j(Z)$.
For each $x \in Z$, take $c_x \subseteq f(x)$ such that
$\ot(c_x)=\cf(f(x))$ and $\sup(c_x)=f(x)$.
\begin{claim}
There is  a strictly increasing sequence $\seq{\nu_i \mid i<\nu}$ in $\nu$
such that
$\{x\in Z \mid [\nu_i, \nu_{i+1}) \cap c_x \neq \emptyset\} \in U$ for every $i<\nu$.
\end{claim}
\begin{proof}
By induction on $i<\nu$.
Suppose $k<\nu$ and $\nu_i$ is defined for all $i<k$.
If $k$ is limit, since $\nu$ is regular, we have $\sup\{\nu_i \mid i<k\}<\nu$.
Then take $\nu_k<\la$ with 
$\sup\{\nu_i \mid i<k\}<\nu_k$.
Suppose $k$ is successor, say $k=l+1$. 
Consider the set $c_{[id_A]_U} \subseteq j(f)([id_A]_U)=\sup(j``\nu)$.
$c_{[id_A]_U}$ is unbounded in $\sup(j``\nu)$. 
Then pick some $\xi \in c_{[id_A]_U}$ with $j(\nu_l)<\xi$,
and take $\nu_k<\nu$ with $\xi<j(\nu_k)$.
We have $ \xi \in [j(\nu_l), j(\nu_k)) \cap c_{[id_A]_U}$,
hence $\{x \in Z \mid [\nu_l, \nu_k) \cap c_x \neq \emptyset\} \in U$.
\end{proof}

Finally, let $X_i=\{x \in Z \mid 
 [\nu_i, \nu_{i+1}) \cap c_x \neq \emptyset\}$,
which is in $U$ by the choice of the $\nu_i$'s.
We check that $\{X_i \mid i<\nu\}$ witnesses that $U$ is $(\mu, \nu)$-regular.
So take $a \in [\nu]^\mu$, and suppose to the contrary that $x \in \bigcap_{i \in a} X_i$.
Then $[\nu_i, \nu_{i+1}) \cap c_x \neq \emptyset$ for every $i \in a$.
Since $\seq{\nu_i \mid i<\nu}$ is strictly increasing,
we have $\size{c_x} \ge \mu$, this contradicts the choice of $c_x$.
\end{proof}

\begin{lemma}\label{5.6}
Let $\ka$ and $\delta$ be uncountable cardinals with $\delta \le \ka$.
Then the following are equivalent:
\begin{enumerate}
\item $\ka$ is $\delta$-strongly compact.
\item For every regular $\la \ge \ka$,
there exists a $\delta$-complete $(\ka, \la)$-regular ultrafilter over some set $A$.
\end{enumerate}
\end{lemma}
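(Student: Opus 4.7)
The plan is to prove both directions by direct construction, taking the Bagaria-Magidor characterization of $\delta$-strong compactness via $\delta$-complete fine ultrafilters on $\pkl$ as the main lever. Lemma \ref{5.4} is not needed for this argument: the duality between fine ultrafilters on $\pkl$ and $(\ka,\la)$-regular ultrafilters is essentially tautological once the canonical data are written down.

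For (1) $\Rightarrow$ (2), I fix a regular $\la \ge \ka$. Since $\ka$ is $\delta$-strongly compact, the characterization theorem supplies a $\delta$-complete fine ultrafilter $U$ on $\pkl$. Take $A = \pkl$ and, for each $\alpha < \la$, set $X_\alpha = \{x \in \pkl : \alpha \in x\}$. Fineness of $U$ gives $X_\alpha \in U$. For any $a \in [\la]^\ka$, an element $x \in \bigcap_{\alpha \in a} X_\alpha$ would satisfy $a \subseteq x$, contradicting $\size{x} < \ka$. Hence $\bigcap_{\alpha \in a} X_\alpha = \emptyset$ and $\{X_\alpha : \alpha < \la\}$ witnesses $(\ka, \la)$-regularity of $U$.

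For (2) $\Rightarrow$ (1), I aim to produce a $\delta$-complete fine ultrafilter on $\pkl$ for every cardinal $\la \ge \ka$, which by the characterization theorem suffices. First assume $\la$ is regular: take a $\delta$-complete $(\ka, \la)$-regular ultrafilter $U$ on some set $A$ with witnesses $\{X_\alpha : \alpha < \la\} \subseteq U$, and define $h : A \to \pkl$ by $h(x) = \{\alpha < \la : x \in X_\alpha\}$. The regularity condition forces $\size{h(x)} < \ka$ for every $x \in A$: otherwise $h(x)$ contains some $a \in [\la]^\ka$, forcing $x \in \bigcap_{\alpha \in a} X_\alpha = \emptyset$, a contradiction. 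The push-forward $U^* = \{Y \subseteq \pkl : h^{-1}(Y) \in U\}$ is then a $\delta$-complete ultrafilter on $\pkl$, and for each $\alpha < \la$ we have $h^{-1}(\{y \in \pkl : \alpha \in y\}) \supseteq X_\alpha \in U$, so $U^*$ is fine. For singular $\la \ge \ka$, I apply the regular case to $\mu = \la^+$ and project the resulting $\delta$-complete fine ultrafilter on $\mathcal{P}_\ka \mu$ down to $\pkl$ along the map $x \mapsto x \cap \la$; both $\delta$-completeness and fineness transfer under this projection. The entire argument is elementary, and I do not anticipate any serious obstacle — the content is exactly the observation that $(\ka,\la)$-regularity witnesses and canonical fine ultrafilter sets are dual to one another.
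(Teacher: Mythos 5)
Your proof is correct, and your forward direction is identical to the paper's (the canonical sets $X_\alpha=\{x\in\pkl\mid\alpha\in x\}$ of a fine ultrafilter witness $(\ka,\la)$-regularity). For the converse you take a genuinely different and more elementary route. The paper, given a $\delta$-complete $(\ka,\la^+)$-regular ultrafilter, passes to the ultrapower $i:V\to N$, invokes Lemma \ref{5.4} to get $\cf^N(\sup(i``\la^+))<i(\ka)$, and then uses a stationary partition of $\{\alpha<\la^+\mid\cf(\alpha)=\om\}$ into $\la$ pieces, reflecting stationarity to $\sup(i``\la^+)$ in $N$ to produce a seed $a\in i(\p_\ka\la)$ with $i``\la\subseteq a$ and $\size{a}^N<i(\ka)$. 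You instead use the classical Rudin--Keisler projection: $h(x)=\{\alpha<\la\mid x\in X_\alpha\}$ lands in $\pkl$ precisely because $(\ka,\la)$-regularity kills intersections over sets of size $\ka$, and the pushforward of $U$ along $h$ is $\delta$-complete and fine essentially by definition, since $h^{-1}(\{y\in\pkl\mid\alpha\in y\})=X_\alpha$. Your treatment of singular $\la$ by projecting from $\p_\ka\la^+$ along $x\mapsto x\cap\la$ is also sound (the paper likewise passes to $\la^+$). What your route buys is brevity and independence from Lemma \ref{5.4} and the stationary-reflection argument; what the paper's route makes explicit is the ultrapower-side statement about $\cf^N$ of $\sup$ of pointwise images, machinery that is reused in the induction proving the harder half of Theorem \ref{prop3.3}, so Lemma \ref{5.4} is not actually saved globally. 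Both arguments are standard; yours is the textbook duality between $(\ka,\la)$-regularity and fine ultrafilters on $\pkl$ as Rudin--Keisler images.
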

\begin{proof}
Suppose $\ka$ is $\delta$-strongly compact.
Fix a regular cardinal $\la \ge \ka$,
and take a $\delta$-complete fine ultrafilter $U$ over $\pkl$.
For $\alpha<\la$, let $X_\alpha=\{x \in \pkl \mid \alpha \in x\} \in U$.
Then the family $\{X_\alpha \mid \alpha<\la\}$ witnesses that
$U$ is $(\ka, \la)$-regular.

For the converse,
pick a cardinal $\la \ge \ka$. 
By (2), there is a $\delta$-complete $(\ka, \la^+)$-regular ultrafilter $W$ over some set $A$.
Take the elementary embedding $i:V \to N \approx \mathrm{Ult}(V, W)$.
We have $\cf^N(\sup(i``\la^+))<i(\ka)$ by Lemma \ref{5.4}.
By the  elementarity of $i$, 
one can check that for every stationary $S \subseteq \{\alpha<\la^+ \mid \cf(\alpha)=\om\}$,
we have that $i(S) \cap \sup(i``\la^+)$ is stationary in $\sup(i``\la^+)$ in $N$ (e.g., see \cite{BM2}).
Fix a stationary partition $\{S_i \mid i<\la\}$ of 
$\{\alpha<\la^+ \mid \cf(\alpha)=\om\}$,
and let $i(\{S_i \mid i<\la\})=\{S'_\alpha \mid \alpha<i(\la)\}$.
Let $a=\{\alpha \in i(\la) \mid S'_\alpha \cap \sup(i``\la^+)$ is stationary in $\sup(i``\la^+)$ in $N\}$.
We have $a \in N$ and $i``\la \subseteq a$.
Moreover, since $\cf^N(\sup(i``\la^+)) <i(\ka)$,
we have $\size{a}^N<i(\ka)$.
Hence $a \in i(\p_\ka \la)$,
and then define the filter $U$ over $\p_\ka \la$ as follows.
For $X \subseteq \pkl$,
\[
X \in U \text{ if and only if } a  \in i(X).
\]
$U$ is a $\delta$-complete ultrafilter.
We see that $U$ is fine, so fix $\alpha<\la$. 
We know $j(\alpha) \in j``\la \subseteq a$,
thus $a \in \{x \in j(\pkl) \mid j(\alpha) \in x \}=j(\{x \in \pkl \mid \alpha \in x\})$,
and therefore $\{x \in \pkl \mid \alpha \in x\} \in U$.
\end{proof}

\begin{define}
Let $\la$ be an uncountable cardinal and $U$ an ultrafilter over $\la$.
$U$ is \emph{weakly normal} if for every $f:\la \to \la$
with $\{\alpha<\la \mid f(\alpha)<\alpha\} \in U$, there is
$\gamma<\la$ such that $\{\alpha<\la \mid f(\alpha)<\gamma\} \in U$.
\end{define}

\begin{lemma}\label{5.3}
Let $\la$ be a regular cardinal, and $\delta \le \la$ an uncountable cardinal.
If $\la$ carries a $\delta$-complete uniform ultrafilter,
then $\la$ carries a $\delta$-complete weakly normal uniform ultrafilter as well.
\end{lemma}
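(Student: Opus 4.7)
The plan is to push $U$ forward along a carefully chosen function so that the resulting ultrafilter $V$ has ``minimal'' identity in the ultrapower, which will force weak normality. Let $U$ be a $\delta$-complete uniform ultrafilter over $\la$, and let $j:V\to M\approx \ult(V,U)$ be the induced embedding. For $f:\la\to\la$, let $f_*(U)=\{X\subseteq\la \mid f^{-1}[X]\in U\}$. The first observation is that $f_*(U)$ is $\delta$-complete whenever $U$ is, and that $f_*(U)$ is uniform iff $[f]_U\ge \sup(j``\la)$: indeed, $\la\setminus\gamma\in f_*(U)$ iff $\{\alpha<\la\mid f(\alpha)\ge\gamma\}\in U$ iff $j(\gamma)\le [f]_U$.

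Since $U$ itself is uniform, the class of $f:\la\to\la$ with $f_*(U)$ uniform is nonempty (take $f=id_\la$). Using the well-foundedness of the ordinals of $M$, I would choose such an $f$ so that the ordinal $[f]_U$ is minimal among $\{[g]_U \mid g:\la\to\la, \ g_*(U)\text{ uniform}\}$. Define $V=f_*(U)$. By the first observation, $V$ is a $\delta$-complete uniform ultrafilter on $\la$; it remains only to verify weak normality.

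So suppose $g:\la\to\la$ with $\{\beta<\la\mid g(\beta)<\beta\}\in V$, and set $h=g\circ f:\la\to\la$. Unwinding the pushforward, this assumption says $\{\alpha<\la\mid g(f(\alpha))<f(\alpha)\}\in U$, equivalently $[h]_U<[f]_U$ in $M$. By the minimality of $[f]_U$, the ultrafilter $h_*(U)$ cannot be uniform, so $[h]_U<\sup(j``\la)$; hence there exists $\gamma<\la$ with $[h]_U<j(\gamma)$. Translating back, $\{\alpha<\la\mid g(f(\alpha))<\gamma\}\in U$, i.e.\ $f^{-1}[\{\beta<\la\mid g(\beta)<\gamma\}]\in U$, which means $\{\beta<\la\mid g(\beta)<\gamma\}\in V$, as required.

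The only real subtlety is justifying that the minimum of $\{[g]_U \mid g_*(U)\text{ uniform}\}$ is attained, but this is immediate from the well-foundedness of the ordinals of the transitive model $M$. The rest of the argument is a bookkeeping translation between statements about $U$-measure and comparisons of ordinals in the ultrapower, so I do not expect any serious obstacle beyond setting up this correspondence cleanly.
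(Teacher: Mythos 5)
Your proof is correct and is essentially the paper's argument in different packaging: the minimal $[f]_U$ among functions with $f_*(U)$ uniform is exactly $\sup(j``\la)$, so your $V=f_*(U)$ coincides with the paper's ultrafilter $W=\{X\subseteq\la \mid \sup(j``\la)\in j(X)\}$, and the weak-normality verification (anything below $\sup(j``\la)$ lies below some $j(\gamma)$) is the same. Your equivalence ``$f_*(U)$ uniform iff $[f]_U\ge\sup(j``\la)$'' does use the regularity of $\la$ (to pass from ``every member has size $\la$'' to ``every final segment is in the filter''), which is available by hypothesis.
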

\begin{proof}
Let $U$ be a 
$\delta$-complete uniform ultrafilter over $\la$,
and $j: V \to M \approx \mathrm{Ult}(V, U)$ be the elementary embedding induced by $U$.
Since $U$ is uniform, we have $\sup(j``\la)\le [id_\la]_U<j(\la)$.
Then define the filter $W$ over $\la$ as follows. For every $X \subseteq \la$,
\[
X \in W \text{ if and only if } \sup(j``\la) \in j(X).
\]
$W$ is a $\delta$-complete uniform ultrafilter over $\la$.
%, and the uniformity of $W$
%can be checked by the same argument used in the proof of 
%Lemma \ref{2.2+}.
We have to check that $W$ is weakly normal.
Take $f:\la \to \la$ and suppose $X=\{\alpha<\la \mid f(\alpha) <\la\} \in U$.
We know $\sup(j``\la) \in j(X)$, hence $j(f)(\sup j``\la)<\sup(j``\la)$.
Then we can pick $\gamma<\la$ with $j(f)(\sup(j``\la)) < j(\gamma)$.
By the definition of $W$, we have that $\{\alpha<\la \mid f(\alpha) <\gamma\} \in W$.
\end{proof}

The following is an immediate consequence of Lemma \ref{5.4} and the weak normality:
\begin{lemma}\label{2.7}
Let $\la$ be a regular cardinal, 
and $U$ a $\sigma$-complete weakly normal ultrafilter over $\la$.
Let $j: V \to M \approx \mathrm{Ult}(V, U)$ be the elementary embedding induced by $U$.
Then $[id_\la]_U=\sup(j``\la)$.
Hence $U$ is $(\mu, \la)$-regular if and only if $\{\alpha <\la \mid \cf(\alpha)<\mu\} \in U$.
\end{lemma}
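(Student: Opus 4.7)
The plan is to establish $[id_\la]_U = \sup(j``\la)$ first, and then derive the $(\mu,\la)$-regularity characterization as a direct consequence of Lemma~\ref{5.4} together with \L{}o\'s' theorem.

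For the direction $\sup(j``\la) \le [id_\la]_U$, I would use that $U$ is uniform (implicit in the context, as the weakly normal ultrafilters of interest here come from Lemma~\ref{5.3}): for every $\gamma < \la$ the cobounded set $\{\alpha < \la \mid \gamma < \alpha\}$ lies in $U$, so $j(\gamma) < [id_\la]_U$, and hence $\sup(j``\la) \le [id_\la]_U$. For the reverse inequality, pick any $\beta < [id_\la]_U$ in $M$ and write $\beta = [f]_U$ for some $f : \la \to \la$. By \L{}o\'s' theorem, $\{\alpha < \la \mid f(\alpha) < \alpha\} \in U$, so $f$ is regressive on a $U$-large set; weak normality then supplies $\gamma < \la$ with $\{\alpha < \la \mid f(\alpha) < \gamma\} \in U$, whence $\beta = [f]_U < j(\gamma) \le \sup(j``\la)$. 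Since this holds for every such $\beta$, we conclude $[id_\la]_U \le \sup(j``\la)$.

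For the second assertion, Lemma~\ref{5.4} gives that $U$ is $(\mu, \la)$-regular if and only if $\cf^M(\sup(j``\la)) < j(\mu)$. Using the equality just established, this becomes $\cf^M([id_\la]_U) < j(\mu)$, and applying \L{}o\'s' theorem to the formula ``$\cf(x) < \mu$'' translates this into the condition $\{\alpha < \la \mid \cf(\alpha) < \mu\} \in U$, as desired.

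I do not anticipate any real obstacle here --- the paper itself describes this lemma as an immediate consequence of the preceding material. The only mild subtlety worth flagging is that the lower bound $\sup(j``\la) \le [id_\la]_U$ genuinely uses uniformity of $U$; this is the standard setting in which weakly normal ultrafilters are considered in the paper, in particular via the construction of Lemma~\ref{5.3}.
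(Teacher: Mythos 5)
Your proposal is correct and follows exactly the route the paper intends: the paper gives no written proof, stating the lemma as an immediate consequence of Lemma \ref{5.4} and weak normality, and your argument (weak normality for $[id_\la]_U \le \sup(j``\la)$, then Lemma \ref{5.4} plus \L{}o\'s' theorem for the regularity equivalence) is precisely the omitted verification. Your flag that the inequality $\sup(j``\la) \le [id_\la]_U$ requires uniformity---which is not in the literal hypotheses but holds in every application, via Lemma \ref{5.3}---is accurate and worth noting.
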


\begin{define}
Let $A$ be a non-empty set,
and $U$ an ultrafilter over $A$.
Let $X \in U$, and for each $x \in X$,
let $W_x$ be an ultrafilter over some set $A_x$.
Then the \emph{$U$-sum} of $\{W_x \mid x \in X\}$
is the collection $D$ of subsets of $\{\seq{x,y } \mid x \in X, y \in A_x\}$
such that for every such subset $Y$,
\[
Y \in D \text{ if and only if } \{x \in X \mid \{y\in A_x \mid \seq{x,y} \in Y\} \in W_x\}
\in U.
\]
\end{define}
\begin{fact}
The $U$-sum $D$ is an ultrafilter over the set $\{\seq{x,y} \mid x \in X, y \in A_x \}$,
and if $U$ and the $W_x$'s are $\delta$-complete,
then so is $D$.
\end{fact}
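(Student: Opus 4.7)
The plan is to verify the three defining properties of an ultrafilter (properness, closure under finite intersections and supersets, and maximality) and then deduce $\delta$-completeness, all by directly unfolding the definition of the $U$-sum.

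First I would check that $D$ is a filter. That the ground set $\{\seq{x,y} \mid x \in X, y \in A_x\}$ belongs to $D$ is immediate since for each $x \in X$ the fiber is the whole $A_x \in W_x$, and $X \in U$. That $\emptyset \notin D$ follows because the set of $x \in X$ with $\emptyset \in W_x$ is empty and $\emptyset \notin U$. Closure under supersets is clear: enlarging $Y$ enlarges each fiber $\{y \in A_x \mid \seq{x,y} \in Y\}$. For closure under binary intersections, note that the fiber of $Y_1 \cap Y_2$ at $x$ equals the intersection of the fibers of $Y_1$ and $Y_2$ at $x$, and so the set of $x$ witnessing $Y_1 \cap Y_2 \in D$ is the intersection of the two sets witnessing $Y_1, Y_2 \in D$, which is in $U$.

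Next I would verify maximality. Given $Y \subseteq \{\seq{x,y} \mid x \in X, y \in A_x\}$, consider the set $Z = \{x \in X \mid \{y \in A_x \mid \seq{x,y} \in Y\} \in W_x\}$. Using that each $W_x$ is an ultrafilter on $A_x$, the complement $X \setminus Z$ is exactly $\{x \in X \mid \{y \in A_x \mid \seq{x,y} \notin Y\} \in W_x\}$, which is the witness set for the complement of $Y$ in $D$. Since $U$ is an ultrafilter on $A$ and $X \in U$, exactly one of $Z$ and $X \setminus Z$ lies in $U$, so exactly one of $Y$ and its complement lies in $D$.

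Finally, for $\delta$-completeness, suppose $\gamma < \delta$ and $\{Y_\alpha \mid \alpha < \gamma\} \subseteq D$. For each $\alpha$ let $Z_\alpha = \{x \in X \mid \{y \in A_x \mid \seq{x,y} \in Y_\alpha\} \in W_x\} \in U$. By $\delta$-completeness of $U$, $Z = \bigcap_{\alpha < \gamma} Z_\alpha \in U$. For any $x \in Z$, each fiber $\{y \mid \seq{x,y} \in Y_\alpha\}$ is in $W_x$; by $\delta$-completeness of $W_x$ their intersection, which equals $\{y \mid \seq{x,y} \in \bigcap_{\alpha<\gamma} Y_\alpha\}$, is in $W_x$. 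Hence $\bigcap_{\alpha<\gamma} Y_\alpha \in D$. There is no real obstacle here; the whole argument is a routine fiberwise bookkeeping, and the only point one must be careful about is using that each $W_x$ is a proper ultrafilter so that complementation commutes with the membership test.
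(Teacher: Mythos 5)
Your proof is correct and is the standard direct verification; the paper states this as a Fact without proof, and your fiberwise unfolding of the definition (including the careful use of $X\in U$ to get maximality from the maximality of each $W_x$) is exactly the intended argument.
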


\begin{lemma}
Let $\ka$ and $\delta$ be uncountable cardinals with $\delta \le \ka$.
Suppose that for every regular $\la \ge \ka$,
there exists a $\delta$-complete uniform ultrafilter over $\la$.
Then $\ka$ is $\delta$-strongly compact.
\end{lemma}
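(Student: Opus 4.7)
By Lemma~\ref{5.6}, I need only construct, for each regular $\la \ge \ka$, a $\delta$-complete $(\ka, \la)$-regular ultrafilter on some set; I plan to do this by induction on regular $\la$. For the base case (the smallest regular $\la_0 \ge \ka$), every $\alpha < \la_0$ satisfies $\cf(\alpha) < \ka$, so Lemma~\ref{2.7} tells me that any $\delta$-complete uniform weakly normal ultrafilter on $\la_0$ -- obtained from the hypothesis via Lemma~\ref{5.3} -- is automatically $(\ka, \la_0)$-regular.

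For the inductive step at regular $\la > \la_0$, pick a $\delta$-complete uniform weakly normal ultrafilter $W$ on $\la$. By Lemma~\ref{2.7}, either $W$ is already $(\ka, \la)$-regular (and we are done), or $S := \{\alpha < \la \mid \cf(\alpha) \ge \ka\} \in W$; I focus on the latter. The inductive hypothesis, together with the equivalence between $(\ka, \mu)$-regular ultrafilters on a set and $\delta$-complete fine ultrafilters on $\p_\ka \mu$ that underlies the proof of Lemma~\ref{5.6}, yields for each $\alpha \in S$ a $\delta$-complete fine ultrafilter $U_\alpha$ on $\p_\ka \alpha$ (transported through a bijection from a suitable cardinal strictly below $\la$).

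Let $D$ be the $W$-sum of $\{U_\alpha \mid \alpha \in S\}$ on $B := \{\seq{\alpha, x} \mid \alpha \in S,\ x \in \p_\ka \alpha\}$; the Fact preceding this lemma shows that $D$ is a $\delta$-complete ultrafilter on $B$. For each $\xi < \la$, set
\[
X_\xi := \{\seq{\alpha, x} \in B \mid \alpha > \xi\ \text{and}\ \xi \in x\}.
\]
Since $\{\alpha \in S \mid \alpha > \xi\} \in W$ and the fineness of $U_\alpha$ gives $\{x \in \p_\ka \alpha \mid \xi \in x\} \in U_\alpha$ for each such $\alpha$, we have $X_\xi \in D$. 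For any $a \in [\la]^\ka$, a hypothetical $\seq{\alpha, x} \in \bigcap_{\xi \in a} X_\xi$ would satisfy $\alpha > \sup(a)$ and $a \subseteq x$, which is impossible since $|x| < \ka = |a|$. Hence $\bigcap_{\xi \in a} X_\xi = \emptyset$, witnessing the $(\ka, \la)$-regularity of $D$.

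The main obstacle is the uniform production of the $U_\alpha$'s at the borderline case where $|\alpha|$ is a singular cardinal whose successor equals $\la$: here the inductive hypothesis on regular $\la' < \la$ does not immediately supply a $\delta$-complete fine ultrafilter on $\p_\ka |\alpha|$, and I must patch this by combining the fine ultrafilters on $\p_\ka \mu$ for $\mu$ regular cofinal below $|\alpha|$ with a $\delta$-complete ultrafilter on $\cf(|\alpha|)$ via a further sum construction. Once that technicality is handled, the $W$-sum construction above goes through uniformly, the induction closes, and Lemma~\ref{5.6} gives that $\ka$ is $\delta$-strongly compact.
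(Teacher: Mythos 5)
Your overall architecture (reduce to Lemma~\ref{5.6}, induct on regular $\la$, settle the base case with Lemma~\ref{5.3} and Lemma~\ref{2.7}, and handle the case $S=\{\alpha<\la\mid\cf(\alpha)\ge\ka\}\in W$ by a sum construction) matches the paper's, and your final verification that the sets $X_\xi$ witness $(\ka,\la)$-regularity is correct \emph{provided} you really have $\delta$-complete fine ultrafilters $U_\alpha$ on $\p_\ka\alpha$. But the ``technicality'' you defer to the last paragraph is in fact the crux of the proof, and your proposed patch does not close it. First, note that when $\la=\mu^+$ your ``borderline case'' is the generic case: since $W$ is uniform on the regular cardinal $\la$, $W$-almost every $\alpha\in S$ satisfies $\size{\alpha}=\mu$, so you cannot avoid it by discarding a small set. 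For $\mu$ regular you can indeed extract a fine ultrafilter on $\p_\ka\mu$ from a $(\ka,\mu)$-regular one by pushing forward along $x\mapsto\{\xi\mid x\in X_\xi\}$ (be explicit about this: the paper's proof of Lemma~\ref{5.6} converts $(\ka,\mu^+)$-regularity, not $(\ka,\mu)$-regularity, into fineness on $\p_\ka\mu$, and using $\mu^+=\la$ there would be circular). The fatal case is $\la=\mu^+$ with $\mu$ singular of small cofinality, say $\cf(\mu)=\om$. Your patch sums fine ultrafilters on $\p_\ka\mu_i$ ($\mu_i$ regular cofinal in $\mu$) along a $\delta$-complete ultrafilter $E$ on $\cf(\mu)$; but for fineness of the resulting ultrafilter on $\p_\ka\mu$ you need $E$ to contain every final segment of $\cf(\mu)$, while every $\delta$-complete ultrafilter on a set of size $<\delta$ is principal. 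So no admissible $E$ exists when $\cf(\mu)<\delta$ (and none is guaranteed for $\delta\le\cf(\mu)<\ka$ either). The induction therefore does not close.

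The paper's inductive step is designed precisely to dodge this obstruction: instead of fine ultrafilters on $\p_\ka\alpha$, it sums the ultrafilters $W_\alpha=U_{\cf(\alpha)}$ over $\cf(\alpha)$, which is \emph{always} a regular cardinal in $[\ka,\la)$ and hence always supplied by the inductive hypothesis. The price is that $(\ka,\la)$-regularity of the sum $D$ is no longer immediate; it is established through Lemma~\ref{5.4} by showing $\sup(i``\la)=[g]_D$ for a function $g$ built from representatives $g_\alpha$ of $\sup(j_\alpha``\alpha)$, and that computation uses the weak normality of $W$ in an essential way (to bound the ordinals $\gamma_\alpha<\alpha$ by a single $\gamma<\la$ on a $W$-large set). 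It is a telling sign of the gap that in your inductive step the weak normality of $W$ is never used: you pay for that omission by needing the stronger objects $U_\alpha$ on $\p_\ka\alpha$, which the induction cannot deliver at successors of singulars. To repair your proof you should replace the $U_\alpha$'s by ultrafilters on $\cf(\alpha)$ and reinstate the weak-normality/Lemma~\ref{5.4} argument, i.e., essentially revert to the paper's route.
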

\begin{proof}
First suppose $\ka$ is regular.
To show that $\ka$ is $\delta$-strongly compact cardinal,
by Lemma \ref{5.6}
it is enough to see that for every regular $\la \ge \ka$,
there exists a $\delta$-complete $(\ka, \la)$-regular ultrafilter over $\la$.
We prove this by induction on $\la$.
For the base step $\la=\ka$,
note that $\{\alpha<\ka \mid \cf(\alpha)<\ka\}=\ka$.
By Lemma \ref{5.3}, we can take a $\delta$-complete weakly normal uniform ultrafilter $U$ over $\ka$.
Then $\{\alpha<\ka \mid \cf(\alpha)<\ka\} =\ka \in U$,
hence $U$ is $(\ka, \ka)$-regular by Lemma \ref{2.7}.

Let $\la>\ka$ be regular, and suppose for every regular $\nu$ with $\ka \le \nu<\la$,
there exists a $\delta$-complete $(\ka, \nu)$-regular ultrafilter $U_\nu$ over $\nu$.
Fix a $\delta$-complete weakly normal uniform ultrafilter $U$ over $\la$.
If $\{\alpha <\la \mid \cf(\alpha)<\ka\} \in U$,
then $U$ is $(\ka, \la)$-regular by Lemmas \ref{5.4} and \ref{2.7}, and we are done.
So, suppose $X^*=\{\alpha<\la \mid \cf(\alpha) \ge \ka \} \in U$.
For each $\alpha \in X^*$, let $W_\alpha$ be the $\delta$-complete $(\ka, \cf(\alpha))$-regular ultrafilter $U_{\cf(\alpha)}$ over $\cf(\alpha)$.
Let $B=\{\seq{\alpha,\beta} \mid \alpha \in X^*, \beta<\cf(\alpha)\}$.
Note that $\size{B}=\la$.
Let us consider the $U$-sum $D$ of $\{W_\alpha \mid \alpha \in X^*\}$.
$D$ is a $\delta$-complete ultrafilter over $B$.
We claim that $D$ is $(\ka, \la)$-regular,
and then we can easily take a $\delta$-complete  $(\ka, \la)$-regular ultrafilter over $\la$.

For $\alpha \in X^*$, let $j_\alpha:V \to M_\alpha \approx \mathrm{Ult}(V, W_\alpha)$
be the elementary embedding induced by $W_\alpha$.
Let $g_\alpha :\cf(\alpha) \to \alpha+1$ be a 
function which represents $\sup(j_\alpha``\alpha)$ in $M_\alpha$.
Note that, since $W_\alpha$ is $(\ka, \cf(\alpha))$-regular,
we have $\cf^{M_\alpha}(\sup(j_\alpha``\alpha))=
\cf^{M_\alpha}(\sup(j_\alpha``\cf(\alpha)))
<j_\alpha(\ka)$,
so $\{\beta<\cf(\alpha) \mid \cf(g_\alpha(\beta))<\ka\} \in W_\alpha$.

Let $i:V \to N \approx \mathrm{Ult}(V, D)$ be the elementary embedding
induced by $D$.
Define the function $g$ on $B$
by $g(\alpha,\beta)=g_{\alpha}(\beta)$.
\begin{claim}
$\sup(i``\la)=[g]_D$.
\end{claim}
\begin{proof}
First, for $\gamma<\la$,
we have $X^* \setminus \gamma \in U$,
and $\{\beta<\cf(\alpha) \mid g_\alpha(\beta) \ge \gamma\} \in W_\alpha$ for all $\alpha \in X^*\setminus \gamma$.
This means  that $\{\seq{\alpha, \beta} \in B \mid g(\alpha,\beta) \ge \gamma\} \in D$,
and $i(\gamma) \le [g]_D$.
Next, take a function $h$ on $B$ with $[h]_D <[g]_D$.
Then $\{\seq{\alpha,\beta} \in B \mid h(\alpha,\beta)<g(\alpha,\beta)\} \in D$,
and $X'=\{\alpha \in X^* \mid \{\beta<\cf(\alpha) \mid h(\alpha,\beta)<g(\alpha,\beta)\} \in W_\alpha\} \in U$.
For $\alpha \in X'$, we know $\{\beta<\cf(\alpha) \mid h(\alpha,\beta)<g(\alpha,\beta)\} \in W_\alpha$.
Because $g(\alpha,\beta)=g_\alpha(\beta)$ represents $\sup(j_\alpha``\alpha)$,
there is some $\gamma_\alpha<\alpha$
such that $\{\beta<\cf(\alpha) \mid h(\alpha,\beta)<\gamma_\alpha\} \in W_\alpha$.
Now, since $U$ is weakly normal and $\gamma_\alpha<\alpha$ for $\alpha \in X'$,
there is some $\gamma<\la$ such that
$\{\alpha \in X' \mid \gamma_\alpha <\gamma\} \in U$.
Then we have $[h]_D<i(\gamma)<\sup(i``\lambda)$.
\end{proof}
Finally, since $\{\beta<\cf(\alpha) \mid \cf(g(\alpha,\beta))<\ka \} \in W_\alpha$
for every $\alpha \in X^*$,
we have $\{\seq{\alpha,\beta} \in B \mid \cf(g(\alpha,\beta))<\ka \} \in D$,
this means that $\cf^N([g]_D)=\cf^N(\sup(i``\la))<i(\ka)$,
 and $D$ is $(\ka, \la)$-regular by Lemma \ref{5.4}.

Next suppose $\ka$ is singular.
By induction on $\la \ge \ka$, 
we take a $\delta$-complete $(\ka,\la)$-regular ultrafilter 
over $\la$ for regular $\la$.
For the base step $\la=\ka^+$, by Lemma \ref{5.3} take a $\delta$-complete weakly normal uniform ultrafilter $U$ over $\ka^+$.
We have $\ka^+=\{\alpha <\ka^+ \mid \cf(\alpha) \le \ka\} \in U$,
and in fact $\{\alpha<\ka^+ \mid \cf(\alpha)<\ka\} \in U$ since $\ka$ is singular.
Then $U$ is $(\ka, \ka^+)$-regular.
The rest is the same to the case that $\ka$ is regular.
\end{proof}
This completes the proof of Theorem \ref{prop3.3}.

Using Theorem \ref{prop3.3}, we also have the following characterization of $\delta$-strongly compact
cardinals.
\begin{cor}\label{2.10}
Let $\delta \le \ka$ be uncountable cardinals.
Then the following are equivalent:
\begin{enumerate}
\item $\ka$ is $\delta$-strongly compact.
\item For every regular $\la \ge \ka$,
there is a definable elementary embedding $j:V \to M$ into some transitive model $M$
with $\delta \le \mathrm{crit}(j) \le  \ka$ and $\sup(j``\la)<j(\la)$.
\item For every regular $\la \ge \ka$,
there is a definable elementary embedding $j:V \to M$ into some transitive model $M$
with $\delta \le \mathrm{crit}(j)$ and $\sup(j``\la)<j(\la)$.
\end{enumerate}
\end{cor}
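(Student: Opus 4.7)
The plan is to establish the equivalences by the cycle $(1)\Rightarrow(2)\Rightarrow(3)\Rightarrow(1)$, where $(2)\Rightarrow(3)$ is immediate since $(2)$ is strictly stronger. The forward direction $(1)\Rightarrow(2)$ should just be a repackaging of the proof of Lemma \ref{2.2+}: assuming $\ka$ is $\delta$-strongly compact, apply clause (4) of the theorem from the introduction to produce, for a given regular $\la\ge\ka$, a definable embedding $j:V\to M$ with $\delta\le\crit(j)\le\ka$ and a set $A\in M$ satisfying $j``\la\subseteq A$ and $\size{A}^M<j(\ka)$. Since $j(\la)$ is regular in $M$ and $j(\ka)\le j(\la)$, the set $A$ is bounded in $j(\la)$, so $\sup(j``\la)\le\sup(A)<j(\la)$, which is exactly what (2) demands.

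The substantive direction is $(3)\Rightarrow(1)$, and the natural route is through Theorem \ref{prop3.3}: it suffices to produce, for each regular $\la\ge\ka$, a $\delta$-complete uniform ultrafilter over $\la$. Given the embedding $j:V\to M$ from (3), let $s=\sup(j``\la)<j(\la)$ and define
\[
U=\{X\subseteq\la\mid s\in j(X)\}.
\]
The Fact at the start of Section 2 tells us $U$ is a $\crit(j)$-complete (hence $\delta$-complete) ultrafilter over $\la$, so the only real content is uniformity.

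To verify uniformity, suppose $X\subseteq\la$ has $\size{X}<\la$. Since $\la$ is regular, $X\subseteq\gamma$ for some $\gamma<\la$; then $j(X)\subseteq j(\gamma)$. Using $j(\gamma)<j(\gamma+1)\le s$, we get $s\notin j(X)$, so $X\notin U$. Thus every set in $U$ has size $\la$, and Theorem \ref{prop3.3} yields that $\ka$ is $\delta$-strongly compact.

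The main (and only) point of care is the uniformity check; everything else is a direct reading of the Fact, Lemma \ref{2.2+}, and Theorem \ref{prop3.3}. Note also that in $(3)\Rightarrow(1)$ we do not need the upper bound $\crit(j)\le\ka$ at all, which is why the weakening from (2) to (3) costs nothing.
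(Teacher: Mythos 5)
Your proposal is correct and follows essentially the same route as the paper: (1)$\Rightarrow$(2) via the covering-set characterization (the paper uses the ultrapower by a fine ultrafilter over $\p_\ka\la$, which is the same embedding), (2)$\Rightarrow$(3) trivially, and (3)$\Rightarrow$(1) by defining $U=\{X\subseteq\la\mid \sup(j``\la)\in j(X)\}$ and invoking Theorem \ref{prop3.3}, with your uniformity check matching the Fact at the start of Section 2. The only cosmetic point is that the set $A$ from clause (4) need not be a subset of $j(\la)$, so one should replace $A$ by $A\cap j(\la)$ before taking $\sup(A)$; this does not affect the argument.
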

\begin{proof}
For (1) $\Rightarrow$ (2), suppose $\ka$ is $\delta$-strongly compact.
Then for every regular $\la \ge \ka$,
there is a $\delta$-complete fine ultrafilter over $\pkl$.
If $j:V \to M$ is the ultrapower induced by the ultrafilter,
then we have that the critical point of $j$ is between $\delta$ and $\ka$,
and $\sup(j``\la)<j(\la)$.

(2) $\Rightarrow$ (3) is trivial.
For (3) $\Rightarrow$ (1), 
it is enough to see that every regular $\la \ge \ka$ carries a $\delta$-complete uniform ultrafilter.
Let $\la \ge \ka$ be regular.
Take an elementary embedding $j:V \to M$ with $\delta \le \mathrm{crit}(j)$ and $\sup(j``\la)<j(\la)$.
Define $U \subseteq \p(\la)$ as follows. For $X \subseteq \la$,
\[
X\in U \text{ if and only if } \sup(j``\la) \in j(X).
\]
We know that $U$ is a $\delta$-complete uniform ultrafilter over $\la$.
\end{proof}

Bagaria and Magidor \cite{BM2} proved that
the least $\delta$-strongly compact cardinal must be a limit cardinal.
We can prove the following slightly stronger result using Theorem \ref{prop3.3}.

For a regular cardinal $\nu$ and $f, g \in {}^\nu \nu$,
define $f \le^* g$ if the set $\{\xi<\nu \mid f(\xi) >g(\xi)\}$ is bounded in $\nu$.
A family $F \subseteq {}^\nu \nu$ is \emph{unbounded} if
there is no $g \in {}^\nu \nu$ such that  $f\le^*g$ for every $f \in F$.
Then let $\mathfrak{b}_\nu=\min\{\size{F} \mid F \subseteq {}^\nu \nu$ is unbounded$\}$.
Note that $\mathfrak b_\nu$ is regular and 
$\nu^+ \le \mathfrak b_\nu \le 2^\nu$.

\begin{prop}
Let $\delta$ be an uncountable cardinal,
and suppose $\ka$ is the least $\delta$-strongly compact cardinal.
Then for every cardinal $\mu<\ka$, there is a regular $\nu$ with
$\mu \le \nu<\mathfrak b_\nu<\ka$.
As an immediate consequence, $\ka$ is a limit cardinal.
\end{prop}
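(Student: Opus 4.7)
\emph{Proof plan.}
The plan is to argue by contradiction: suppose some $\mu < \ka$ violates the conclusion, i.e., $\mathfrak{b}_\nu \ge \ka$ for every regular $\nu$ with $\mu \le \nu < \ka$; without loss of generality $\mu \ge \delta$ is uncountable. I would then show that $\mu^+$ is $\delta$-strongly compact, contradicting minimality of $\ka$. By Theorem~\ref{prop3.3}, it suffices to produce a $\delta$-complete uniform ultrafilter on every regular $\la \ge \mu^+$. For $\la \ge \ka$ this is free from the $\delta$-strong compactness of $\ka$, so the interesting range is $\mu^+ \le \la < \ka$, where $\chi := \mathfrak{b}_\la \ge \ka$.

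For such $\la$, by Corollary~\ref{2.10} it is enough to exhibit a definable elementary embedding $j : V \to M$ with $\delta \le \crit(j)$ and $\sup(j``\la) < j(\la)$. I would start by fixing a $\delta$-complete weakly normal uniform ultrafilter $U$ on $\chi$ (obtained from Theorem~\ref{prop3.3} applied to $\ka$ together with Lemma~\ref{5.3}), and let $j : V \to M$ be its ultrapower, so that $[id_\chi]_U = \sup(j``\chi)$ by Lemma~\ref{2.7}. Since $\chi = \mathfrak{b}_\la$, I would fix a $<^*$-increasing unbounded sequence $\seq{f_\alpha \mid \alpha < \chi}$ in $({}^\la\la, \le^*)$ (standard inductive construction, possible because every family of size $<\chi$ is $\le^*$-bounded). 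Setting $h := j(\alpha \mapsto f_\alpha)([id_\chi]_U) \in M$, the $<^*$-increasing property transfers to $M$ by elementarity, and since $j(\alpha) < [id_\chi]_U$ for each $\alpha < \chi$, this yields $j(f_\alpha) <^* h$ in $M$ for every $\alpha < \chi$.

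The remaining task is to show $\sup(j``\la) < j(\la)$. Suppose otherwise: $\sup(j``\la) = j(\la)$. Then $h(j(\xi)) < j(\la) = \sup(j``\la)$ for every $\xi < \la$, so one may define $g : \la \to \la$ in $V$ by letting $g(\xi)$ be the least $\zeta < \la$ with $j(\zeta) > h(j(\xi))$. For each $\alpha < \chi$, let $\eta_\alpha < j(\la)$ be an $M$-bound witnessing $j(f_\alpha) <^* h$; since $\eta_\alpha < \sup(j``\la)$, there is $\xi_\alpha < \la$ with $j(\xi_\alpha) > \eta_\alpha$, and for every $\xi > \xi_\alpha$ elementarity yields $j(f_\alpha(\xi)) = j(f_\alpha)(j(\xi)) < h(j(\xi)) < j(g(\xi))$, i.e., $f_\alpha(\xi) < g(\xi)$. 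So $f_\alpha \le^* g$ for every $\alpha < \chi$, contradicting unboundedness of the family. The main obstacle is precisely this last construction: fabricating $g \in V$ that dominates the entire $\chi$-sized unbounded family by leveraging both the assumed equality $\sup(j``\la) = j(\la)$ and the dominator $h$ produced inside $M$ via weak normality; this is where the hypothesis $\chi = \mathfrak{b}_\la$ does the real work. The closing observation that $\ka$ is a limit cardinal is then automatic, since $\ka = \nu^+$ would demand a regular $\nu' \ge \nu$ with $\nu' < \mathfrak{b}_{\nu'} < \nu^+$, impossible as $\mathfrak{b}_{\nu'} > \nu'$.
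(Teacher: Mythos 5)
Your main technical work is sound: fixing a regular $\la\in[\mu^+,\ka)$ with $\chi=\mathfrak b_\la\ge\ka$, taking a $\delta$-complete weakly normal uniform ultrafilter on $\chi$, manufacturing the dominator $h=j(\vec f)([id_\chi]_U)$ from a $<^*$-increasing unbounded scale, and pulling $h$ back to a $g\in{}^\la\la$ under the assumption $\sup(j``\la)=j(\la)$ is a correct way to force $\sup(j``\la)<j(\la)$ and hence to extract a $\delta$-complete uniform ultrafilter on $\la$. This is essentially the paper's argument run in the other direction: the paper fixes a ``bad'' $\nu$ (one carrying no $\delta$-complete uniform ultrafilter), takes an embedding witnessing compactness at $\mathfrak b_\nu$, and obtains its dominating function $g'$ in $M$ by elementarity from ``every family of size $<\mathfrak b_\nu$ is bounded,'' whereas you obtain $h$ from weak normality applied to an increasing enumeration; the two devices are interchangeable, and the pull-back of the dominator via $\sup(j``\la)=j(\la)$ is the same key step in both.

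There is, however, a genuine gap in how you close the argument. Your contradiction is ``$\mu^+$ is $\delta$-strongly compact, contradicting minimality of $\ka$,'' but that contradicts minimality only if $\mu^+<\ka$, and $\mu^+<\ka$ is not available: it is exactly the ``immediate consequence'' (that $\ka$ is a limit cardinal) which the proposition is supposed to deliver. If $\ka=\mu^+$, then your ``interesting range'' $[\mu^+,\ka)$ is empty, your argument merely re-derives that $\ka$ itself is $\delta$-strongly compact, and no contradiction appears --- yet this is precisely the case that must be excluded, since $\mathfrak b_\nu\ge\nu^+\ge\mu^+=\ka$ for every regular $\nu\ge\mu$ would falsify the conclusion for this $\mu$. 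The repair is small but necessary: take the candidate cardinal to be $\mu$ itself rather than $\mu^+$ (Theorem \ref{prop3.3} and its proof cover singular candidates), so the regular $\la$ you must handle range over $[\mu,\ka)$ and the conclusion ``$\mu$ is $\delta$-strongly compact'' genuinely contradicts $\mu<\ka$. A smaller point of the same flavor: your reduction ``without loss of generality $\mu\ge\delta$'' silently invokes the statement at $\mu'=\delta$, which is unavailable when $\delta=\ka$; the paper avoids this by treating $\mu<\delta$ directly, taking $\nu=\mu^+<\crit(j)$ so that $j(\nu)=\nu=\sup(j``\nu)$ and the same domination argument applies.
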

\begin{proof} 
Fix $\mu<\ka$. 
Take a regular $\nu$ as follows.
If $\mu \ge \delta$, by the minimality of $\ka$,
there is the least regular $\nu \ge \mu$
such that $\nu$ cannot carry a $\delta$-complete uniform ultrafilter over $\nu$.
We know $\nu <\ka$ since $\ka$ is $\delta$-strongly compact.
If $\mu <\delta$, let $\nu=\mu^+$. Thus, $\nu$ is regular with $\nu \le \delta \le \ka$.
We show that $\mathfrak b_\nu<\ka$ in both cases.
Let $\la=\mathfrak b_\nu$, and suppose to the contrary that $\la \ge \ka$.
By Corollary \ref{2.10},
we can take an elementary embedding $j:V \to M$ with $\delta \le \mathrm{crit}(j) \le \ka$
and $\sup(j``\la)<j(\la)$.
Then we have $\sup(j``\nu)=j(\nu)$; 
If $\mu<\delta$,
then $\nu=\mu^+ \le \delta \le \mathrm{crit}(j)$.
Since $\mathrm{crit}(j)$ is measurable and $\nu$ is successor,
we have $\nu<\mathrm{crit}(j)$. Hence $j(\nu)=\nu=\sup(\nu)=\sup(j``\nu)$.
If $\mu \ge \delta$ but $\sup(j``\nu)<j(\nu)$,
we can take a $\delta$-complete uniform ultrafilter 
$U=\{X \subseteq \nu \mid \sup(j``\nu) \in j(X)\}$ over $\nu$.
This contradicts  the choice of $\nu$.

Fix an unbounded set $F \subseteq {}^\nu \nu$ with size $\la$.
Let $F=\{f_\alpha \mid \alpha<\la \}$.
Consider $j(F)=\{f'_\alpha \mid  \alpha<j(\la)\}$.
Let $\gamma=\sup(j``\la)<j(\la)$.
By the elementarity of $j$,
the set $\{f'_\alpha \mid \alpha<\gamma\}$ is bounded in $j({}^\nu \nu)$ in $M$.
Thus there is $g' \in j({}^\nu \nu)$ such that $f'_\alpha \le^* g'$ for every $\alpha<\gamma$.
Define $g \in {}^\nu \nu$ by
\[
g(\xi)=\min\{\eta<\nu \mid g'(j(\xi)) \le j(\eta)\}
\]
for every $\xi<\nu$, this is well-defined since $\sup(j``\nu)=j(\nu)$.
Then we have $g'(j(\xi)) \le j(g(\xi))$ for every $\xi<\nu$.
Since the family $\{f_\alpha \mid \alpha<\la\}$ is unbounded,
there is $\alpha<\la$ with $f_\alpha \not \le^* g$.
$j(f_\alpha)=f'_{j(\alpha)} \le^* g'$,
thus there is $\eta'<j(\nu)$ such that
$j(f_\alpha)(\xi)\le g'(\xi)$ for every $\xi$ with $\eta'<\xi<j(\nu)$.
Since $j(\nu)=\sup(j``\nu)$,
we can take $\eta<\nu$ with $\eta' <j(\eta)$.
For $\xi$ with $\eta<\xi<\nu$,
we have $\eta'<j(\eta)<j(\xi)<j(\nu)$.
Thus $j(f_\alpha(\xi))=f'_{j(\alpha)}(j(\xi)) \le g'(j(\xi))$.
Hence $j(f_\alpha(\xi)) \le g'(j(\xi)) \le j(g(\xi))$ for every $\xi>\eta$.
By the elementarity of $j$,
we have that $f_\alpha (\xi) \le g(\xi)$ for every $\xi > \eta$,
 therefore $f_\alpha \le^* g$.
This is a contradiction.
\end{proof}

\begin{question}
For an uncountable cardinal $\delta$,
is the least $\delta$-strongly compact cardinal a strong limit?
Or a fixed point of the $\aleph$ or $\beth$-functions?
\end{question}

\section{On Products of $\delta$-Lindel\"of spaces}

In this section we give a proof of Theorem \ref{thm3}.
The direction (1) $\Rightarrow$ (2) just follows from the same proof as in \cite{BM2}.
For the converse direction in the case $\delta=\om_1$,
in \cite{BM2}, they used an algebraic method.
We give a direct proof, an idea that comes from Gorelic \cite{G}.

Suppose $\ka$ is not $\delta$-strongly compact.
By Theorem \ref{prop3.3}, there is a regular cardinal $\la \ge \ka$
such that $\la$ cannot carry a $\delta$-complete uniform ultrafilter.
Let $\calF$ be the family of all partitions of $\la$ with size $<\delta$,
that is, each $\calA \in \calF$ is a family of pairwise disjoint subsets of $\la$
with $\bigcup \calA=\la$ and $\size{\calA}<\delta$.
Let $\{\calA^\alpha \mid \alpha<2^\la\}$ be an enumeration of $\calF$.
For $\alpha<2^\la$, let $\delta_\alpha=\size{\calA^\alpha}<\delta$,
and $\{A^\alpha_\xi \mid \xi<\delta_\alpha\}$  be an enumeration of $\calA^\alpha$.
We identify $\delta_\alpha$ as a discrete space, which is trivially $\delta$-Lindel\"of.
We show that the product space $X=\prod_{\alpha<2^\la} \delta_\alpha$ is not $\ka$-Lindel\"of.

For $\gamma<\la$,
define $f_\gamma \in X$ as follows.
For $\alpha<2^\la$, since $\calA^\alpha$ is a partition of $\la$,
there is a unique $\xi<\delta_\alpha$ with $\gamma \in A^\alpha_\xi$.
Then let $f_\gamma(\alpha)=\xi$.

Let $Y=\{f_\gamma \mid \gamma<\la\}$.
It is clear that $\size{Y}=\la$.
\begin{claim}
For every $g \in X$, there is an open neighborhood $O$ of $g$ such that
 $\size{O \cap Y}<\la$.
\end{claim}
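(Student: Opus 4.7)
The plan is to argue by contradiction: assume $g \in X$ has no basic open neighborhood meeting $Y$ in a set of cardinality $<\la$, and construct a $\delta$-complete uniform ultrafilter on $\la$, contradicting the choice of $\la$ via Theorem \ref{prop3.3}. A basic open neighborhood of $g$ has the form $O_I = \{h \in X : h \restriction I = g \restriction I\}$ for finite $I \subseteq 2^\la$, and unravelling the definition of $f_\gamma$ gives
\[
O_I \cap Y = \{f_\gamma : \gamma \in {\textstyle \bigcap_{\alpha \in I}} A^\alpha_{g(\alpha)}\}.
\]
So, under the contrary assumption, for every finite $I \subseteq 2^\la$ the ``chosen-cell intersection'' $\bigcap_{\alpha \in I} A^\alpha_{g(\alpha)}$ has size $\ge \la$ and hence equals $\la$ in cardinality; in particular it is nonempty.

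Next, because $\delta$ is uncountable, every binary partition $\{B,\la\setminus B\}$ of $\la$ lies in $\calF$. For each $B \subseteq \la$ I would fix an index $\alpha(B) < 2^\la$ with $\calA^{\alpha(B)} = \{B,\la\setminus B\}$, enumerated so that $A^{\alpha(B)}_0 = B$, and set
\[
U = \{B \subseteq \la : g(\alpha(B)) = 0 \}.
\]
Using only the nonemptyness of all finite intersections of chosen cells, each ultrafilter axiom is forced by consulting a small batch of partitions: comparing $\alpha(B)$ with $\alpha(\la\setminus B)$ forces exactly one of $B,\la\setminus B$ into $U$; comparing $\alpha(B_1),\alpha(B_2),\alpha(B_1 \cap B_2)$ forces $B_1 \cap B_2 \in U$ whenever $B_1,B_2 \in U$; comparing $\alpha(B),\alpha(C)$ for $B \subseteq C$ forces $C \in U$ whenever $B \in U$. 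Uniformity is immediate, since $B \in U$ yields $|B| = |A^{\alpha(B)}_{g(\alpha(B))}| = \la$.

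The delicate step is $\delta$-completeness. Given $\mu < \delta$ and $\{B_i : i < \mu\} \subseteq U$, I would introduce the partition
\[
\{{\textstyle \bigcap_{i<\mu}} B_i\} \cup \bigl\{B_j^c \cap {\textstyle\bigcap_{i<j}} B_i : j < \mu \bigr\},
\]
which has fewer than $\delta$ cells and so appears as some $\calA^\beta$. The chosen cell at $\beta$ cannot be any of the sets $B_j^c \cap \bigcap_{i<j}B_i$, since its intersection with the chosen cell $B_j$ at $\alpha(B_j)$ would be empty, contradicting our standing assumption. Hence the chosen cell at $\beta$ equals $\bigcap_{i<\mu} B_i$, and a final comparison with $\alpha(\bigcap_{i<\mu} B_i)$ places $\bigcap_{i<\mu} B_i$ in $U$. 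The main obstacle is really psychological: spotting which auxiliary partition to consult at each verification step. Once one commits to letting the nonemptyness of finite intersections of chosen cells do all the enforcing, each ultrafilter axiom is essentially forced by a suitable choice of partition from $\calF$.
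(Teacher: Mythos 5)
Your argument is correct, and its skeleton matches the paper's: negate the claim, observe that all finite intersections of the chosen cells $A^\alpha_{g(\alpha)}$ have size $\la$, produce from this a uniform ultrafilter over $\la$ in which every partition from $\calF$ has a cell, and contradict the choice of $\la$ via Theorem \ref{prop3.3}. Where you differ is in how the ultrafilter is obtained. The paper extends the family $\{A^\alpha_{g(\alpha)} \mid \alpha<2^\la\}$ to a uniform ultrafilter $U$ by the ultrafilter extension lemma, notes that $U$ cannot be $\delta$-complete by the choice of $\la$, and then observes that any partition of size $<\delta$ witnessing incompleteness already occurs as some $\calA^\beta$, whose chosen cell $A^\beta_{g(\beta)}$ lies in $U$ --- contradiction. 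You instead define the ultrafilter explicitly from $g$ via the two-cell partitions $\{B,\la\setminus B\}$ and verify every axiom, including $\delta$-completeness, by hand from the finite-intersection property of the chosen cells. This buys two small things: the existence of the ultrafilter becomes choice-free at this step (your $U$ is definable from $g$, and your verification in effect shows it is the unique ultrafilter containing all chosen cells), and the role of each auxiliary partition is made completely explicit; the cost is a longer verification. One cosmetic repair: the enumeration $\{A^\alpha_\xi \mid \xi<\delta_\alpha\}$ of each $\calA^\alpha$ is fixed in advance in the construction of $X$, so you cannot stipulate that $A^{\alpha(B)}_0=B$; define instead $B\in U$ if and only if $A^{\alpha(B)}_{g(\alpha(B))}=B$, which is the property all of your subsequent comparisons actually use.
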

\begin{proof}
Suppose not.
Then the family $\{A^\alpha_{g(\alpha)} \mid \alpha<2^\la\}$ has the finite intersection property,
moreover for every finitely many $\alpha_0,\dotsc, \alpha_n<2^\la$,
the intersection $\bigcap_{i \le n} A^{\alpha_i}_{g(\alpha_i)}$ has cardinality $\la$.
Hence we can find a uniform ultrafilter $U$ over $\la$ extending 
$\{A^\alpha_{g(\alpha)} \mid \alpha<2^\la\}$.
By our assumption, $U$ is not $\delta$-complete.
Then we can take a partition $\calA$ of $\la$ with size $<\delta$
such that $A \notin U$ for every $A \in \calA$.
We can take $\beta<2^\la$ with $\calA=\calA^\beta$.
However then $A^\beta_{g(\beta)} \in \calA$ and $A^\beta_{g(\beta)} \in U$,
this is a contradiction.
\end{proof}

For each $g \in X$, take an open neighborhood $O_g$ of $g$ with
$\size{O_g \cap Y}<\la$.
Let $\calU=\{O_g \mid g \in X\}$.
$\calU$ is an open cover of $X$,
but has no subcover of size $<\la$ because $\size{Y}=\la$.
Hence $\calU$ witnesses that $X$ is not $\la$-Lindel\"of,
and not $\ka$-Lindel\"of. This completes our proof.
\\

By the same proof, we have:
\begin{cor}
Let $\ka$ be an uncountable cardinal, and $\delta<\ka$ a cardinal.
Then the following are equivalent:
\begin{enumerate}
\item $\ka$ is $\delta^+$-strongly compact.
\item Identifying $\delta$ as a discrete space,
for every cardinal $\la$, the product space $\delta^\la$ is $\ka$-Lindel\"of.
\end{enumerate}
\end{cor}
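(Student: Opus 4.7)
The plan is to derive this corollary directly from Theorem \ref{thm3} together with the construction used in its proof.

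For (1) $\Rightarrow$ (2), I would first observe that the discrete space $\delta$ is $\delta^+$-Lindel\"of, since any open cover of a space of cardinality $\delta$ admits a subcover of size $\le \delta<\delta^+$. Hence if $\ka$ is $\delta^+$-strongly compact, applying Theorem \ref{thm3} to the constant family of copies of $\delta$ indexed by $\la$ immediately gives that $\delta^\la$ is $\ka$-Lindel\"of for every cardinal $\la$.

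For (2) $\Rightarrow$ (1), I would argue the contrapositive by replaying the proof of Theorem \ref{thm3} with every factor taken to be $\delta$. Assuming $\ka$ is not $\delta^+$-strongly compact, Theorem \ref{prop3.3} supplies a regular $\la \ge \ka$ that carries no $\delta^+$-complete uniform ultrafilter. I would enumerate all partitions of $\la$ of size $\le \delta$ as $\{\calA^\alpha \mid \alpha<2^\la\}$ and, padding each $\calA^\alpha$ with empty sets as needed, index $\calA^\alpha=\{A^\alpha_\xi \mid \xi<\delta\}$ uniformly. For $\gamma<\la$, I would define $f_\gamma \in \delta^{2^\la}$ by letting $f_\gamma(\alpha)$ be the unique $\xi<\delta$ with $\gamma \in A^\alpha_\xi$, and set $Y=\{f_\gamma \mid \gamma<\la\}$, of cardinality $\la$.

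The key step, mirroring the claim in the proof of Theorem \ref{thm3}, is to show that every $g \in \delta^{2^\la}$ admits a basic open neighborhood $O_g$ with $\size{O_g \cap Y}<\la$. If some coordinate of $g$ picks an empty pad, namely $A^\alpha_{g(\alpha)}=\emptyset$, then the basic neighborhood fixing that single coordinate contains no $f_\gamma$ at all. Otherwise, if no such neighborhood existed, every finite intersection of the sets $A^\alpha_{g(\alpha)}$ would be unbounded in $\la$, so together with the co-bounded filter on $\la$ they would generate a filter extending to a uniform ultrafilter $U$ on $\la$; for any partition of $\la$ of size $\le \delta$, realized as some $\calA^\beta$, the set $A^\beta_{g(\beta)} \in U$ is non-empty and is therefore a genuine piece of that partition, making $U$ into a $\delta^+$-complete uniform ultrafilter on $\la$ and contradicting the choice of $\la$. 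The cover $\{O_g \mid g \in \delta^{2^\la}\}$ then has no subcover of size $<\la$ (since $\size{Y}=\la$ and $\la$ is regular), hence none of size $<\ka$, and so $\delta^{2^\la}$ fails to be $\ka$-Lindel\"of. The main point that needs care is the bookkeeping with empty padded pieces, handled cleanly by the case split above.
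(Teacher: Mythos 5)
Your proof is correct and follows essentially the same route as the paper, which simply states that the corollary follows ``by the same proof'' as Theorem \ref{thm3}: replace partitions of size $<\delta$ by partitions of size $\le\delta$, take each factor to be the discrete space $\delta$, and rerun the ultrafilter argument via Theorem \ref{prop3.3}. The one detail the paper leaves implicit --- that padding the partitions with empty pieces forces a case split in the key claim (a coordinate hitting an empty pad gives a neighborhood missing $Y$ entirely) --- is exactly the point you identify and handle correctly.
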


\section{On  products of countably tight spaces}

We prove Theorems \ref{thm4} and \ref{thm5} in this section.
For a topological space $X$ and $Y \subseteq X$,
let $\overline{Y}$ denote the closure of $Y$ in $X$.

\begin{lemma}\label{4.1}
Let $S$ be an uncountable set and $U$ a $\sigma$-complete ultrafilter over $S$.
Let $X$ be a countably tight space,
and $\{O_s \mid s \in S\}$  a family of open sets in $X$.
Define the set $O \subseteq X$ by
$x \in O  \iff \{s \in S \mid x \in O_s\} \in U$
for $x \in X$.
Then $O$ is open in $X$.
\end{lemma}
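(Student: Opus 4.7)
The plan is to show that the complement $C = X \setminus O$ is closed, which is cleaner than working with $O$ directly. Note that since $U$ is an ultrafilter, $x \in C$ if and only if $\{s \in S \mid x \notin O_s\} \in U$. So I want to verify that whenever $p \in \overline{C}$, the set $\{s \in S \mid p \notin O_s\}$ lies in $U$.

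Here is where countable tightness enters. Suppose $p \in \overline{C}$. Since $t(X) = \omega$, I can pick a countable $B \subseteq C$ with $p \in \overline{B}$. For each $b \in B$ the set $T_b = \{s \in S \mid b \notin O_s\}$ belongs to $U$ by the definition of $C$, and because $U$ is $\sigma$-complete and $B$ is countable, $T = \bigcap_{b \in B} T_b$ is still in $U$.

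The key observation is then that for every $s \in T$, the set $B$ misses $O_s$ entirely, so $B \subseteq X \setminus O_s$. Since $O_s$ is open its complement is closed, and therefore $\overline{B} \subseteq X \setminus O_s$, which forces $p \notin O_s$. In other words, $T \subseteq \{s \in S \mid p \notin O_s\}$, so the latter set is in $U$ and $p \in C$. This shows $C = \overline{C}$, hence $O$ is open.

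The only substantive step is the one using countable tightness: without it, the pointwise condition defining $O$ would concern the full closure of $C$, and there would be no way to compress the relevant witnesses into a countably indexed intersection where $\sigma$-completeness of $U$ can be invoked. Everything else is a routine unpacking of the definitions of ultrafilter, complement, and open set, so I do not anticipate any further obstacle.
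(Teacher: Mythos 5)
Your proof is correct and follows essentially the same route as the paper's: countable tightness supplies a countable $B$ in the complement with $p\in\overline{B}$, $\sigma$-completeness compresses the witnesses $T_b$ into a single set $T\in U$, and the closedness of each $X\setminus O_s$ finishes the job. The only difference is cosmetic --- you argue directly that $\{s\in S\mid p\notin O_s\}\supseteq T$ lies in $U$, while the paper assumes $p\in O$ and derives a contradiction from a single $s\in T\cap\{s\mid p\in O_s\}$.
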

\begin{proof}
It is enough to show that $\overline{X \setminus O} \subseteq X \setminus O$.
Take $x \in \overline{X \setminus O}$, and suppose to the contrary that
$x \notin X \setminus O$. We have $\{s \in S \mid x \in O_s\} \in U$.
Since $X$ is countably tight, there is a countable $A \subseteq X \setminus O$ with
$x \in \overline{A}$.
For each $y \in A$, we have $\{s \in S \mid y \notin O_s\} \in U$.
Since $A$ is countable and $U$ is $\sigma$-complete,
there is $s \in S$ such that $y \notin O_s$ for every $y \in A$ 
but $x \in O_s$. Then $A \subseteq X \setminus O_s$.
Since $O_s$ is open, we have $\overline{X \setminus O_s} \subseteq X \setminus O_s$.
Hence $x \in \overline{A} \subseteq \overline{X \setminus O_s} \subseteq X \setminus O_s$,
and $x \in O_s$. This is a contradiction.
\end{proof}

The following proposition immediately yields Theorem \ref{thm4}.

\begin{prop}\label{prop4.2}
Suppose $\ka$ is $\om_1$-strongly compact, and
$\mu \le \ka$ the least measurable cardinal.
Let $I$ be a set with $\size{I}<\mu$,
and $\{X_i \mid i \in I\}$ a family of
countably tight spaces.
Then  $t(\prod_{i \in I}X_i) \le \ka$.
More precisely, for every $A \subseteq \prod_{i \in I} X_i$ and
$f \in \overline{A}$, there is $B \subseteq A$ such that
$\size{B}<\ka$ and $f \in \overline{B}$.
\end{prop}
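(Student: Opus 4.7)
The plan is to argue by contradiction after reducing to the case $|A|\ge\ka$ (if $|A|<\ka$, take $B=A$). Suppose $f\in\overline{A}$ but $f\notin\overline{B}$ for every $B\in\mathcal{P}_\ka A$. For each such $B$, fix a basic open neighborhood $V_B=\prod_{i\in F_B}V_B^i\times\prod_{i\notin F_B}X_i$ of $f$ disjoint from $B$, where $F_B\in[I]^{<\om}$. Since $\ka$ is $\om_1$-strongly compact, fix an $\om_1$-complete fine ultrafilter $U$ on $\mathcal{P}_\ka A$.

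The crucial step, and the only one that uses $|I|<\mu$, is to stabilize the supports $F_B$. By $\om_1$-completeness of $U$, some fixed $n<\om$ satisfies $\{B:|F_B|=n\}\in U$; after restricting, enumerate $F_B=\{i_1(B),\dots,i_n(B)\}$ canonically using a well-ordering of $I$. For each $j\le n$, the pushforward $(i_j)_*U:=\{S\subseteq I : i_j^{-1}(S)\in U\}$ is an $\om_1$-complete ultrafilter on $I$. Since $\mu$ is the least measurable cardinal, by Ulam's theorem no cardinal $<\mu$ carries a non-principal $\om_1$-complete ultrafilter; hence $(i_j)_*U$ concentrates at some $i_j^*\in I$, so $\{B:i_j(B)=i_j^*\}\in U$. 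Intersecting these $n$ conditions, the finite set $J^*:=\{i_1^*,\dots,i_n^*\}$ satisfies $\{B:F_B=J^*\}\in U$.

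Now apply Lemma \ref{4.1} coordinatewise. For each $i\in J^*$, set $\tilde V_B^i:=V_B^i$ if $i\in F_B$ and $\tilde V_B^i:=X_i$ otherwise, and define $O_i:=\{x\in X_i:\{B:x\in\tilde V_B^i\}\in U\}$. Lemma \ref{4.1}, applied to the countably tight space $X_i$ and the family $\{\tilde V_B^i\}_{B}$, makes $O_i$ open in $X_i$, and $f(i)\in O_i$ because $f\in V_B$ for all $B$. Then $O:=\prod_{i\in J^*}O_i\times\prod_{i\notin J^*}X_i$ is a basic open neighborhood of $f$. To finish, fix $g\in A$: fineness gives $\{B:g\in B\}\in U$, and on its intersection with $\{B:F_B=J^*\}$ we have $g\notin V_B$, so some $i\in J^*$ satisfies $g(i)\notin V_B^i$. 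Since $J^*$ is finite, partitioning this $U$-large set into $|J^*|$ pieces according to the witness $i$ shows that a single $i\in J^*$ works on a $U$-large set; for that $i$, $g(i)\notin O_i$ and hence $g\notin O$. Thus $O\cap A=\emptyset$, contradicting $f\in\overline{A}$.

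The main obstacle is the support-stabilization step: without $|I|<\mu$ the pushforward ultrafilters $(i_j)_*U$ on $I$ could be non-principal, and no uniform finite support $J^*$ need exist, so the basic neighborhood $O$ could not be built. Once $J^*$ is in hand, the rest is a direct double application of Lemma \ref{4.1} together with the finite partition property of ultrafilters and the fineness of $U$.
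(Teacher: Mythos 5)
Your proof is correct and follows essentially the same route as the paper's: a $\sigma$-complete fine ultrafilter on a $\p_\ka$-set, stabilization of the finite supports of the witnessing neighborhoods using that $\size{I}$ is below the least measurable, a coordinatewise application of Lemma \ref{4.1} to build a limit neighborhood of $f$, and fineness to reach the contradiction. The only cosmetic differences are that the paper puts the ultrafilter on $\p_\ka(\prod_{i\in I}X_i)$ and shows directly that $\{s \mid f\in\overline{A\cap s}\}\in U$, whereas you put it on $\p_\ka A$ and run the whole argument as a single contradiction; the underlying mechanism is identical.
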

\begin{proof}
Take $A \subseteq \prod_{i \in I}X_i$ and $f \in \overline{A}$.
We will find $B \subseteq A$ with $\size{B}<\ka$ and $f \in \overline{B}$.

Since $\ka$ is $\om_1$-strongly compact,
we can find a $\sigma$-complete fine ultrafilter $U$ over $\p_\ka (\prod_{i \in I}X_i)$.
Note that $U$ is in fact $\mu$-complete.
We show that $\{s \in \p_\ka (\prod_{i \in I}X_i) \mid f \in \overline{A \cap s}\} \in U$.
Suppose not and let $E =\{s \in \p_\ka (\prod_{i \in I}X_i)\mid
f \notin \overline{A \cap s}\} \in U$.
For each $s \in E$, since $f \notin \overline{A \cap s}$,
we can choose finitely many $i_0^s, \dotsc, i_n^s \in I$ and 
open sets $O_{i_k}^s \subseteq X_{i_k}$ respectively
such that $f(i_k^s) \in O_{i_k}^s$ for every $k \le n$
but $\{g \in A\cap s \mid \forall k \le n\,(g(i_k^s) \in O^s_{i_k})\}=\emptyset$.
Since $U$ is $\mu$-complete and $\size{I}<\mu$,
we can find $i_0,\dotsc, i_n \in I$
such that $E'=\{s \in E  \mid \forall k \le n\,(i_{k}^s=i_k)\} \in U$.

For each $i_k$,
let $O_{i_k} \subseteq X_{i_k}$ be the set  defined by
$x \in O_{i_k} \iff \{s \in E' \mid x \in O_{i_k}^s\} \in U$.
By Lemma \ref{4.1}, each $O_{i_k}$ is open in $X_{i_k}$ with
$f(i_k) \in O_{i_k}$.
Since $f \in \overline{A}$,
there is $h \in A$ such that
$h(i_k) \in O_{i_k}$ for every $k \le n$.
Because $U$ is fine, we can take $s \in E'$ with
$h \in A \cap s$ and $h(i_k) \in O^s_{i_k}$ for every $k \le n$.
Then $h \in \{g \in A \cap s \mid \forall k \le n\,(g(i_k) \in O^s_{i_k})\}$,
which is a contradiction.
\end{proof}

\begin{note}
\begin{enumerate}
\item 
The restriction ``$\size{I}<\mu$'' in Proposition \ref{prop4.2} cannot be eliminated.
If $I$ is an infinite set and $\{X_i \mid i \in I\}$ is a family of $T_1$ spaces with $\size{X_i}\ge 2$,
then $t(\prod_{i \in I} X_i) \ge \size{I}$:
For each $i \in X$ take distinct points $x_i, y_i \in X$.
For each finite subset $a \subseteq I$,
define $f_a \in \prod_{i \in I} X_i $ by  $f_a(i)=x_i$ if $i \in a$,
and $f_a(i) =y_i$ otherwise.
Let $X=\{f_a \mid a \in [I]^{<\om}\}$, and $g$ be the function with $g(i)=x_i$ for $i \in I$.
Then $g \in \overline{X}$ but for every $Y \subseteq X$ with $\size{Y}<\size{I}$ we have $g \notin \overline{Y}$.
\item We do not know 
if Proposition \ref{prop4.2} can be improved as follows:
If $\ka$ is the least $\om_1$-strongly compact
and $I$ is a set with size $<\ka$,
then the product of countably tight spaces indexed by $I$ has tightness $\le \ka$.
\end{enumerate}
%Hence the tightness of the product of infinitely many countably tight (in fact Fr\v echet-Urysohn) spaces
%can be arbitrary large.
\end{note}

Recall that the Cohen forcing notion $\bbC$ is the poset $2^{<\om}$ with
the reverse inclusion order.
\begin{prop}\label{4.4}
Let $\ka$ be a cardinal which is not $\om_1$-strongly compact.
Let $\bbC$ be the Cohen forcing notion, and $G$ be $(V, \bbC)$-generic.
Then in $V[G]$,
there are regular $T_1$ Lindel\"of spaces $X_0$ and $X_1$
such that $X_0^n$ and $X_1^n$ are Lindel\"of for every $n<\om$,
but the product space $X_0 \times X_1$ has an open cover which has no subcover of size $<\ka$.
\end{prop}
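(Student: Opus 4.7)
The strategy is to convert the combinatorial obstruction furnished by Theorem \ref{prop3.3} into a Gorelic-style \cite{G} construction, carried out in $V[G]$, of two Lindel\"of spaces with a pathological product.

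First I would set up the witnessing cardinal. Since $\ka$ is not $\om_1$-strongly compact in $V$, Theorem \ref{prop3.3} produces a regular $\la \ge \ka$ in $V$ that carries no $\sigma$-complete uniform ultrafilter. I would then check that this absence persists in $V[G]$: because $\bbC$ is ccc, a $\sigma$-complete uniform ultrafilter on $\la$ in $V[G]$ would, via a nice-name argument, yield a $\sigma$-complete uniform filter in $V$ that could be extended to a $\sigma$-complete uniform ultrafilter in $V$, contradicting the choice of $\la$. So $\la$ still fails to carry a $\sigma$-complete uniform ultrafilter in $V[G]$.

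Next, working in $V[G]$, I would imitate the construction from Section 3 but split its single ``big product'' into two Lindel\"of factors using the Cohen real $c$ as a generic bisection. Enumerate in $V[G]$ the countable partitions of $\la$, say $\{\calA^\alpha \mid \alpha<\theta\}$ with $\calA^\alpha=\{A^\alpha_n \mid n<\om\}$, and for each $\gamma<\la$ let $f_\gamma(\alpha)$ be the unique $n$ with $\gamma \in A^\alpha_n$, as in Section 3. Use $c$ to partition the index set $\theta$ as $\theta = I_0 \sqcup I_1$. Each $X_i$ is then defined on an underlying set of size $\la$ (for example, $\la$ together with one ``ideal'' limit point per $\alpha \in I_i$), with topology so that points of $\la$ are isolated and basic open neighborhoods of the ideal point for $\alpha$ are of the form $\{x^\alpha\} \cup (\la \setminus \bigcup_{n\in F}A^\alpha_n)$ for finite $F \subseteq \om$. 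Then each $X_i$ is, by construction, regular and $T_1$.

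To prove $X_i^n$ is Lindel\"of, given an open cover with no countable subcover, I would extract from the uncovered basic sets a filter on $\la$ and use an argument in the spirit of Lemma \ref{4.1} to show it is countably generated by pieces of countable partitions, and therefore extends to a uniform $\sigma$-complete ultrafilter on $\la$; this would contradict the choice of $\la$. To prove $X_0 \times X_1$ has no subcover of size $<\ka$, consider the ``diagonal'' $\{(\gamma^0,\gamma^1) \mid \gamma<\la\}$ where $\gamma^i$ denotes the copy of $\gamma$ in $X_i$; the genericity of the bisection $c$ makes it closed discrete in $X_0 \times X_1$, so surrounding each diagonal point by a small enough basic box yields an open cover of $X_0 \times X_1$ in which no subfamily of size $<\la \le \ka$ can cover the $\la$-sized diagonal.

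The main obstacle will be calibrating the topologies of $X_0$ and $X_1$ so that all three requirements hold simultaneously: regularity, Lindel\"ofness of all finite powers (which must reduce cleanly to the nonexistence of $\sigma$-complete uniform ultrafilters on $\la$), and a bad product cover. The role of the single Cohen real is delicate but essential: it provides the generic splitting of the partition index set so that each half is ``too thin'' to single-handedly ruin Lindel\"ofness of its factor, while the union of both halves still encodes enough of the full partition structure to defeat $\ka$-Lindel\"ofness of the product.
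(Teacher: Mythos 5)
There is a genuine gap: the spaces you describe are not Lindel\"of. In your $X_i$ every point of $\la$ is isolated and every basic neighbourhood of an ideal point $x^\alpha$ contains no other ideal point, so $\{x^\alpha \mid \alpha \in I_i\}$ is a closed discrete subspace of cardinality $\size{I_i}$; since there are $2^\la$ countable partitions of $\la$, at least one (in fact each) of $I_0$, $I_1$ is uncountable, and an uncountable closed discrete subspace kills Lindel\"ofness outright. Relatedly, your Cohen real splits the wrong index set. The paper (following Proposition 3.1 of \cite{U1}) splits $\om$ --- the indices of the \emph{pieces} $A^\alpha_n$ inside each countable partition --- into generic halves $a$ and $b$, and both factors see \emph{all} partitions $\alpha$. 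That is what makes the relevant diagonal closed discrete in the product: a non-$\sigma$-complete fine ultrafilter $U$ omits every piece of some countable partition $\calA_\alpha$, so the $X_0$-neighbourhood avoiding $\{A^\alpha_n \mid n \in a\}$ and the $X_1$-neighbourhood avoiding $\{A^\alpha_n \mid n \in b\}$ have a product meeting the diagonal only in $(U,U)$, since any $W$ lying in both would omit every piece of $\calA_\alpha$ while $\bigcup_n A^\alpha_n$ is the whole space. With your split of the set of partitions into $I_0 \sqcup I_1$, a point $(x^\alpha, x^\beta)$ with $\alpha \in I_0$ and $\beta \in I_1$ has every basic neighbourhood meeting your diagonal, because finitely many pieces of $\calA^\alpha$ together with finitely many pieces of $\calA^\beta$ never cover $\la$; so the diagonal is not closed and your argument against a small subcover of the product does not go through.

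Two further points. First, the underlying set in the paper's construction is $\mathrm{Fine}(\pkl)^V$, the set of ground-model fine ultrafilters over $\pkl$, topologized by refining the compact Stone-space topology; the Lindel\"ofness of $X_0$, $X_1$ and of their finite powers is exactly where the genericity of the Cohen real is used (Propositions 3.1 and 3.9 of \cite{U1}), and it cannot be obtained by your route of ``extract a filter and extend it to a $\sigma$-complete uniform ultrafilter'': a $\sigma$-complete filter need not extend to a $\sigma$-complete ultrafilter --- that is precisely what $\om_1$-strong compactness would assert, and it fails here by hypothesis. The same objection undermines your preservation argument for $V[G]$, which asks to extend a $\sigma$-complete uniform filter in $V$ to a $\sigma$-complete uniform ultrafilter in $V$. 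The paper sidesteps the preservation issue entirely by taking the points of the spaces to be $V$-ultrafilters and choosing $\la$ so that none of them is $\sigma$-complete.
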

\begin{proof}
Let $X_0$ and $X_1$ be the spaces constructed in the proof of Proposition 3.1 in \cite{U1}.
We sketch the constructions for the convenience of the readers.

We work in $V$. Fix $\la \ge \ka$ such that there is  no $\sigma$-complete fine ultrafilters over $\pkl$.
Let $\mathrm{Fine}(\pkl)$ be the set of all fine ultrafilters over $\pkl$.
Identifying $\pkl$ as a discrete space,
$\mathrm{Fine}(\pkl)$ is a closed subspace of the Stone-\v Cech compactification of $\pkl$,
hence  $\mathrm{Fine}(\pkl)$ is a compact Hausdorff space.
Let $\{\calA_\alpha \mid \alpha<\mu\}$ be an enumeration of all countable partitions of $\pkl$,
and for $\alpha<\mu$, fix an enumeration $\{A^\alpha_n \mid n<\om\}$ of $\calA_\alpha$.
Take a $(V, \bbC)$-generic $G$. 
Let $r=\bigcup G$, which is a function from $\om$ to $\{0,1\}$.
Let $a=\{n<\om \mid r(n)=0\}$, and $b=\{n<\om \mid r(n)=1\}$.

In $V[G]$, we define $X_0$ and $X_1$ as follows.
The underlying set of $X_0$ is $\mathrm{Fine}(\pkl)^V$, the set of all fine ultrafilters over $\pkl$
in $V$.
The topology of $X_0$ is generated by the family
$\{ \{U \in \mathrm{Fine}(\pkl)^V \mid A \in U, A^{\alpha_i}_n \notin U$ for every $i \le k$ and $n \in a\}\mid
A \in V$, $A \subseteq (\pkl)^V$, $\alpha_0,\dotsc, \alpha_k <\mu, k<\om\}$.
The space $X_1$ is defined as a similar way with replacing $a$ by $b$.
$X_0$ and $X_1$ are zero-dimensional regular $T_1$ Lindel\"of spaces in $V[G]$.
Moreover $X_0 \times X_1$ has an open cover which has no subcover of size $<\ka$.
In addition, we can check that $X_0^n$ and $X_1^n$ are Lindel\"of for every $n<\om$
(see the proof of Proposition 3.9 in \cite{U1}).
\end{proof}

For a Tychonoff space $X$,
let $C_p(X)$ be the space of all continuous functions from $X$ to the real line $\bbR$
with the pointwise convergent topology.
For a topological space $X$,
the Lindel\"of degree $L(X)$ is the minimum infinite cardinal $\ka$
such that every open cover of $X$ has a subcover of size $\le \ka$.
Hence $X$ is Lindel\"of if and only if $L(X)=\om$.

\begin{thm}[Arhangel'ski\u \i-Pytkeev \cite{A,P}]\label{cp}
Let $X$ be a Tychonoff space, and $\nu$ a cardinal.
Then $L(X^n) \le \nu$ for every $n<\om$
if and only if $t(C_p(X)) \le \nu$.
In particular,
each finite power of $X$ is Lindel\"of if and only if $C_p(X)$ is countably tight.
\end{thm}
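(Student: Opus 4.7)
The plan is to prove both directions separately; these are classical arguments due to Arhangel'ski\u\i\ and Pytkeev, respectively.

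First, to show $L(X^n) \le \nu$ for every $n < \om$ implies $t(C_p(X)) \le \nu$, I would translate convergence in $C_p(X)$ into cover data on the finite powers. Fix $A \subseteq C_p(X)$ and $f \in \overline{A}$. For each $n \ge 1$, $k \ge 1$, and $g \in A$, set
\[
U_g^{n,k} = \{(x_1, \ldots, x_n) \in X^n : |g(x_i) - f(x_i)| < 1/k \text{ for all } i \le n\}.
\]
Because every basic neighborhood of $f$ in $C_p(X)$ meets $A$, the family $\{U_g^{n,k} : g \in A\}$ is an open cover of $X^n$: given $(x_1,\ldots,x_n)$, some $g \in A$ lies in the basic neighborhood $\{h : |h(x_i) - f(x_i)| < 1/k,\ i \le n\}$ of $f$. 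Applying $L(X^n) \le \nu$ produces a subcover indexed by some $A_{n,k} \subseteq A$ of size $\le \nu$. Then $B = \bigcup_{n,k} A_{n,k}$ has cardinality $\le \nu$, and a routine check using basic neighborhoods of $f$ shows $f \in \overline{B}$.

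For the converse, I would argue contrapositively. Suppose $\mathcal{V}$ is an open cover of $X^n$ with no subcover of size $\le \nu$. Using that $X$ is Tychonoff, I may reduce to the case that $\mathcal{V}$ consists of basic cozero rectangles $V_1 \times \cdots \times V_n$, and for each such rectangle choose continuous witnesses $\phi_i : X \to [0,1]$ with $V_i = \phi_i^{-1}((0,1])$. From these witnesses I build a set $A \subseteq C_p(X)$, indexed by finite subfamilies $F \in [\mathcal{V}]^{<\om}$, using points of $X^n$ missed by each $F$ to assemble the required continuous functions, together with a distinguished point $f_0 \in C_p(X)$ (typically $f_0 \equiv 0$) such that $f_0 \in \overline{A}$ but no $B \subseteq A$ with $|B| \le \nu$ satisfies $f_0 \in \overline{B}$. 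This contradicts $t(C_p(X)) \le \nu$, completing the argument.

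The main obstacle is the second direction: one must translate the failure of a Lindel\"of property on $X^n$, a combinatorial statement about open sets, into a failure of countable tightness in the function space, a statement about pointwise convergence. The Tychonoff assumption enters essentially through the cozero-rectangle reduction and the construction of the continuous functions that realize this encoding. The first direction, by contrast, is essentially bookkeeping that matches basic neighborhoods in $C_p(X)$ with rectangular open sets in $X^n$.
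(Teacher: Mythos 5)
The paper states this as a quoted classical theorem of Arhangel'ski\u\i\ and Pytkeev and gives no proof of its own, so there is no internal argument to compare against; I can only assess your attempt on its merits. Your first direction (covers of $X^n$ imply tightness of $C_p(X)$) is correct and complete: the sets $U_g^{n,k}$ are open boxes, they cover $X^n$ because every basic neighborhood of $f$ meets $A$, and the union $B=\bigcup_{n,k}A_{n,k}$ has size $\le\nu$ with $f\in\overline{B}$ by the matching of $1/k$ against $\epsilon$. That is the standard Arhangel'ski\u\i\ argument.

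The second direction, however, is not a proof but a promissory note, and the description you give of the intended construction points in the wrong direction. You say you will index $A$ by finite subfamilies $F\in[\mathcal{V}]^{<\om}$ ``using points of $X^n$ missed by each $F$'' and take $f_0\equiv 0$ as the limit point; you never say what the functions are, why $f_0\in\overline{A}$, or why a set $B\subseteq A$ of size $\le\nu$ with $f_0\in\overline{B}$ would yield a subcover of size $\le\nu$. That last implication is the entire content of this direction, and it is exactly where the mechanism must run the other way from what you describe. In the classical argument one takes $A$ to be the set of $f\in C_p(X)$ with $0\le f\le 1$ such that $(\mathrm{supp}\, f)^n$ is contained in the union of some finite $F_f\subseteq\mathcal{V}$ (after refining $\mathcal{V}$ to cozero boxes), and the distinguished point is the constant function $\mathbf{1}$, not $\mathbf{0}$. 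One checks $\mathbf{1}\in\overline{A}$ by shrinking neighborhoods of finitely many points so that all $n$-fold products of the shrunken sets land inside members of $\mathcal{V}$; and if $B\subseteq A$ with $\size{B}\le\nu$ had $\mathbf{1}\in\overline{B}$, then $\gamma'=\bigcup_{f\in B}F_f$ would cover $X^n$, since any $(y_1,\dots,y_n)$ admits some $f\in B$ with $f(y_i)>0$ for all $i$, forcing $(y_1,\dots,y_n)\in(\mathrm{supp}\, f)^n\subseteq\bigcup F_f$. The point is that membership of $f$ in the neighborhood determined by $(y_1,\dots,y_n)$ must \emph{force} that tuple to be covered by $F_f$; a scheme built on points \emph{missed} by each $F$ does not produce this forcing, and I see no way to complete your sketch as written. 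You should either supply the support-based construction in full or cite the result, as the paper does.
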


\begin{prop}\label{4.6}
Let $\ka$ be a cardinal which is not $\om_1$-strongly compact.
Let $\bbC$ be the Cohen forcing notion, and $G$ be $(V, \bbC)$-generic.
Then in $V[G]$,
there are regular $T_1$ Lindel\"of spaces $X_0$ and $X_1$
such that $C_p(X_0)$ and $C_p(X_1)$ are countably tight
and  $t(C_p(X_0) \times C_p(X_1)) \ge \ka$.
\end{prop}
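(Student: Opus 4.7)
The plan is to combine Proposition~\ref{4.4} with the Arhangel'ski\u\i-Pytkeev theorem (Theorem~\ref{cp}) and the canonical homeomorphism $C_p(X_0 \oplus X_1) \cong C_p(X_0) \times C_p(X_1)$. First I would apply Proposition~\ref{4.4} to $\ka$, which produces in $V[G]$ regular $T_1$ Lindel\"of spaces $X_0$ and $X_1$ with $X_i^n$ Lindel\"of for every $i<2$ and every $n<\om$, while $X_0 \times X_1$ admits an open cover with no subcover of size $<\ka$. Since a regular Lindel\"of $T_1$ space is normal and hence Tychonoff, both $X_i$ are Tychonoff, so Theorem~\ref{cp} immediately yields that $C_p(X_0)$ and $C_p(X_1)$ are countably tight.

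For the lower bound $t(C_p(X_0) \times C_p(X_1)) \ge \ka$, I would pass to the topological sum $X = X_0 \oplus X_1$, which is again regular $T_1$ Lindel\"of and hence Tychonoff. The key point is that $X^2$ splits as the clopen disjoint union of the four products $X_i \times X_j$ for $i,j<2$, so $X_0 \times X_1$ sits as a closed subspace of $X^2$. Since closed subspaces inherit the Lindel\"of degree, $L(X^2) \ge L(X_0 \times X_1) \ge \ka$, and Theorem~\ref{cp} applied to $X$ forces $t(C_p(X)) \ge \ka$. Combined with the natural homeomorphism $C_p(X) \cong C_p(X_0) \times C_p(X_1)$ given by $f \mapsto (f \upharpoonright X_0, f \upharpoonright X_1)$, this produces $t(C_p(X_0) \times C_p(X_1)) \ge \ka$, as required.

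I do not foresee any genuine obstacle here beyond invoking the right pieces in the right order. The only routine verifications are that $X_0 \oplus X_1$ remains Tychonoff (immediate from Tychonoffness of each summand) and that the restriction map $C_p(X_0 \oplus X_1) \to C_p(X_0) \times C_p(X_1)$ is a homeomorphism in the pointwise topology, which is a standard fact about $C_p$-spaces on disjoint sums.
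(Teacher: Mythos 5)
Your proposal is correct and follows essentially the same route as the paper: invoke Proposition~\ref{4.4}, apply Theorem~\ref{cp} to each $X_i$ for countable tightness of $C_p(X_i)$, and then use the homeomorphism $C_p(X_0)\times C_p(X_1)\cong C_p(X_0\oplus X_1)$ together with the fact that $X_0\times X_1$ embeds as a closed subspace of $(X_0\oplus X_1)^2$ to push $L\ge\ka$ through Theorem~\ref{cp} into $t(C_p(X_0)\times C_p(X_1))\ge\ka$. Your added remark that regular $T_1$ Lindel\"of spaces are normal and hence Tychonoff fills in a hypothesis of Theorem~\ref{cp} that the paper leaves implicit.
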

\begin{proof}
Let $X_0$ and $X_1$ be spaces from Proposition \ref{4.4}.
By Theorem \ref{cp}, $C_p(X_0)$ and $C_p(X_1)$ are countably tight.
It is clear that $C_p(X_0) \times C_p(X_1)$ is homeomorphic to $C_p(X_0 \oplus X_1)$,
where $X_0 \oplus X_1$ is the topological sum of $X_0$ and $X_1$.
Since $(X_0 \oplus X_1)^2$ has a closed subspace which is homeomorphic to $X_0 \times X_1$,
we have $L((X_0 \oplus X_1)^2) \ge L(X_0 \times X_1)$,
and by Proposition \ref{4.4}, we know $L(X_0 \times X_1) \ge \ka$.
Hence $L((X_0 \oplus X_1)^2) \ge \ka$,
and we have $t(C_p(X_0) \times C_p(X_1)) =t(C_p(X_0 \oplus X_1))\ge \ka$ by Theorem \ref{cp}.
\end{proof}

Combining these results we have the following theorem, which contains Theorem \ref{thm5}:

\begin{thm}
Let $\bbC$ be the Cohen forcing notion, and $G$ be $(V, \bbC)$-generic.
Then for every cardinal $\ka$ 
the following are equivalent  in $V[G]$:
\begin{enumerate}
\item $\ka$ is $\om_1$-strongly compact.
\item For every countably tight spaces $X$ and $Y$
we have  $t(X \times Y) \le \ka$.
\item For every countably tight Tychonoff spaces $X$ and $Y$
we have  $t(X \times Y) \le \ka$.
\item For every regular $T_1$ Lindel\"of spaces $X$ and $Y$,
if $C_p(X)$ and $C_p(Y)$ are countably tight
then $t(C_p(X) \times C_p(Y)) \le \ka$.
\end{enumerate}
\end{thm}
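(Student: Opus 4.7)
The plan is a cyclic argument $(1) \Rightarrow (2) \Rightarrow (3) \Rightarrow (4) \Rightarrow (1)$, splicing together Propositions \ref{prop4.2} and \ref{4.6} with two essentially formal bridging implications. The substantive content is already in those two propositions; what remains is bookkeeping.

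For $(1) \Rightarrow (2)$ I would simply invoke Proposition \ref{prop4.2} with index set $I = \{0,1\}$; the hypothesis $\size{I} < \mu$ is automatic, and the statement is a ZFC fact so it transfers to $V[G]$ without further comment. The implication $(2) \Rightarrow (3)$ is immediate, since (3) is a restriction of (2). For $(3) \Rightarrow (4)$, I would use the standard observation that for any topological space $X$, the space $C_p(X)$ sits inside the Tychonoff product $\bbR^X$ and is therefore itself Tychonoff; thus, under the hypotheses of (4), both $C_p(X)$ and $C_p(Y)$ are countably tight Tychonoff spaces, and (3) applies directly to conclude $t(C_p(X) \times C_p(Y)) \le \ka$.

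For the closing implication $(4) \Rightarrow (1)$ I would argue contrapositively inside $V[G]$. If $\ka$ is not $\om_1$-strongly compact there, Proposition \ref{4.6} (which is proved in $V[G]$ and requires only that $\bbC$ be the Cohen poset) produces regular $T_1$ Lindel\"of spaces $X_0, X_1$ such that $C_p(X_0)$ and $C_p(X_1)$ are countably tight while $t(C_p(X_0) \times C_p(X_1)) \ge \ka$, and this witnesses the failure of (4).

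The main obstacle, if there is one, lies in this last step: Proposition \ref{4.6} as stated yields $t(C_p(X_0) \times C_p(X_1)) \ge \ka$ rather than the strict inequality one might naively want. I would therefore want to trace back through Proposition \ref{4.4} and its source in \cite{U1} to confirm that the exhibited open cover of $X_0 \times X_1$ genuinely prevents $L((X_0 \oplus X_1)^2) \le \ka$, so that the Arhangel'ski\u\i--Pytkeev transfer (Theorem \ref{cp}) rules out $t(C_p(X_0) \times C_p(X_1)) \le \ka$. Once this routine verification is done, the entire theorem follows purely by assembly.
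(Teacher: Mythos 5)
Your decomposition is exactly the paper's: (1) $\Rightarrow$ (2) from Proposition \ref{prop4.2}, (2) $\Rightarrow$ (3) $\Rightarrow$ (4) as formal restrictions (using that $C_p(X)\subseteq \bbR^X$ is Tychonoff), and (4) $\Rightarrow$ (1) by contraposition from Proposition \ref{4.6}. There are, however, two points where your assembly is not yet complete.

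First, the hypothesis of Proposition \ref{4.6} (inherited from Proposition \ref{4.4}) is that $\ka$ fails to be $\om_1$-strongly compact \emph{in the ground model} $V$, whereas the contrapositive of (4) $\Rightarrow$ (1) hands you the failure of $\om_1$-strong compactness \emph{in} $V[G]$. You treat the proposition as if its hypothesis were already a statement about $V[G]$. The bridge is the fact, which the paper records as the first line of its proof, that $\ka$ is $\om_1$-strongly compact in $V$ if and only if it is in $V[G]$ (see \cite{U1}); without citing this preservation result the last implication does not go through as written.

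Second, the obstacle you flag is real: Proposition \ref{4.6} yields $t(C_p(X_0)\times C_p(X_1))\ge \ka$, while refuting (4) requires a value $>\ka$, and tracing back to Proposition \ref{4.4} only gives an open cover of $X_0\times X_1$ with no subcover of size $<\ka$. The paper is silent on this, but it is repairable. One fix: if $\ka$ is not $\om_1$-strongly compact then neither is $\ka^+$ (the class of $\om_1$-strongly compact cardinals is upward closed, and by Bagaria--Magidor \cite{BM2} its least element is a limit cardinal, so it cannot be $\ka^+$); applying Proposition \ref{4.6} to $\ka^+$ gives $t\ge \ka^+>\ka$. Alternatively, in Proposition \ref{4.4} the cardinal $\la\ge\ka$ with no $\sigma$-complete fine ultrafilter over $\pkl$ can be taken $>\ka$, since the set of such $\la$ is upward closed (project an ultrafilter on $\p_\ka\la'$ to one on $\pkl$ via $x\mapsto x\cap\la$), and then the cover has no subcover of size $\le\ka$. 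With either patch, and with the preservation fact made explicit, your argument coincides with the paper's.
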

\begin{proof}
Note that for a cardinal $\ka$,
$\ka$ is $\om_1$-strongly compact in $V$ if and only if
so is in $V[G]$ (e.g., see \cite{U1}).

Implications (2) $\Rightarrow$ (3) $\Rightarrow$ (4) are trivial.
(1) $\Rightarrow$ (2) follows from Proposition \ref{prop4.2},
and Proposition \ref{4.6} shows (4) $\Rightarrow$ (1).
\end{proof}

Theorem \ref{thm5} is a consistency result,
and the following natural question arises:
\begin{question}
In ZFC, is the least $\om_1$-strongly compact cardinal
an exact upper bound on 
the tightness of the products of two countably tight spaces?
How about Fr\v echet-Urysohn spaces?
\end{question}

To answer this question for the  Fr\v echet-Urysohn case,
we have to consider other spaces instead of $C_p(X)$, because
if $C_p(X)$ and $C_p(Y)$ are Fr\v echet-Urysohn,
then so is $C_p(X) \times C_p(Y)$. 
This can be verified as follows.
It is known that if $X$ is compact Hausdorff, then $X$ is scattered if and only if
 $C_p(X)$ is Fr\v echet-Urysohn
(Pytkeev \cite{P2}, Gerlits \cite{Ger}). 
In addition the compactness assumption can be weakened to that
each finite power is Lindel\"of.
Gewand \cite{Ge} proved that if $X$ and $Y$ are Lindel\"of and $X$ is scattered, then
$X \times Y$ is Lindel\"of as well.
%In addition it is clear that if $X$ and $Y$ are scattered then so is $X \times Y$.

Finally we present another application of Lemma \ref{4.1}.
Bagaria and da Silva \cite{BS} proved that if $\ka$ is $\om_1$-strongly compact,
$X$ is a first countable space, and every subspace of $X$ with size $<\ka$ is normal,
then $X$ itself is normal.
Using Lemma \ref{4.1}, we can weaken the first countable assumption to the countable tightness assumption.
\begin{prop}
Let $\ka$ be an $\om_1$-strongly compact cardinal,
and $X$ a countably tight topological space.
If every subspace of $X$ with size $<\ka$ is 
normal, then the whole space $X$ is also normal.
\end{prop}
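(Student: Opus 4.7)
The plan is to reuse the ultrafilter-averaging trick from Proposition \ref{prop4.2}, with Lemma \ref{4.1} doing all the topological work. First observe that if $\size{X}<\ka$ the claim is immediate, since $X$ is then a subspace of itself of size $<\ka$. So assume $\size{X}\ge\ka$, and invoke the characterization of $\om_1$-strongly compactness to fix a $\sigma$-complete fine ultrafilter $W$ on $\p_\ka X$.

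Take disjoint closed sets $C,D\subseteq X$; the goal is to separate them by disjoint open sets. For each $s\in\p_\ka X$ the subspace $s$ has cardinality $<\ka$, hence is normal by hypothesis, so $C\cap s$ and $D\cap s$ can be separated inside $s$ by disjoint relatively open sets. Lift these to $X$-open sets $O_s,P_s\subseteq X$ such that $O_s\cap s$ and $P_s\cap s$ are disjoint in $s$ and contain $C\cap s$ and $D\cap s$ respectively. Now define
\[
O=\{x\in X\mid \{s\in\p_\ka X : x\in O_s\}\in W\}, \qquad P=\{x\in X\mid \{s\in\p_\ka X : x\in P_s\}\in W\}.
\]
Lemma \ref{4.1}, applied to the $\sigma$-complete $W$ and to the families $\{O_s\}_s$ and $\{P_s\}_s$, yields that both $O$ and $P$ are open in $X$; this is the one and only place the countable tightness of $X$ enters.

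The rest is a routine verification. For $C\subseteq O$: if $x\in C$, fineness of $W$ gives $\{s:x\in s\}\in W$, and on that set $x\in C\cap s\subseteq O_s$, so $x\in O$; by symmetry $D\subseteq P$. For $O\cap P=\emptyset$: if some $x$ lay in both, intersecting the three $W$-large sets $\{s:x\in s\}$, $\{s:x\in O_s\}$, $\{s:x\in P_s\}$ would produce an $s$ with $x\in s\cap O_s\cap P_s$, contradicting the construction. There is no real obstacle; the only subtle point is that the locally chosen $O_s,P_s$ need not be globally disjoint in $X$, but fineness of $W$ pins the relevant witnesses into $s$ on a large set, where disjointness was arranged at the outset.
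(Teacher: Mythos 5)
Your proposal is correct and follows essentially the same route as the paper: fix a $\sigma$-complete fine ultrafilter over $\p_\ka X$, separate $C\cap s$ and $D\cap s$ inside each small subspace $s$, average the local open sets through the ultrafilter, and invoke Lemma \ref{4.1} (via countable tightness) for openness, with fineness handling containment and disjointness. The only difference is your explicit (and harmless) treatment of the case $\size{X}<\ka$ and your remark that the $O_s,P_s$ need only be disjoint relative to $s$, which the paper states as $O_s\cap V_s\cap s=\emptyset$.
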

\begin{proof}
Take pairwise disjoint closed subsets $C$ and $D$.
Let $U$ be a $\sigma$-complete fine ultrafilter over $\p_\ka X$.
By the assumption, for each $s \in \p_\ka X$,
the subspace $s$ is normal. Hence we can find open sets $O_s$  and $V_s$ 
such that $s \cap C \subseteq O_s$, $s \cap D \subseteq V_s$, and $O_s \cap V_s \cap s=\emptyset$.
Define $O$ and $V$ by
$x \in O \iff \{s \in \pkl \mid x \in O_s\} \in U$ and
$x \in V \iff \{s \in \pkl \mid x \in V_s\} \in U$.
By using Lemma \ref{4.1}, we know that $O$ and $V$ are open,
and it is easy to see that $O$ and $V$ are disjoint, $C \subseteq O$, and $D \subseteq V$.
\end{proof}

\subsection*{Acknowledgments}
The author would like to thank the referee  for many corrections and valuable comments.
This research was supported by 
JSPS KAKENHI Grant Nos. 18K03403 and 18K03404.

\printindex

\end{document}